
\documentclass[final]{siamltex}
\usepackage{amssymb}
\usepackage{graphicx,fancyhdr}
\usepackage{multirow,amsmath}
\usepackage{dsfont}
\usepackage{subfigure}
\usepackage{bbm}
\usepackage{booktabs}
\usepackage{bbm}
\usepackage{amsfonts}
\usepackage{mathrsfs,float}
\usepackage{color,bm}
\usepackage{algorithm}
\usepackage{algpseudocode}
\makeatletter

\newtheorem{thm}{Theorem}[section]

\newtheorem{remark}[thm]{Remark}
\usepackage[pagewise]{lineno}

\title{  Fast numerical derivatives  based on multi-interval Fourier extension\thanks{The research is partially supported by National Natural Science Foundation of China (Nos. 12261131494, 12171455, RSF-NSFC 23-41-00002)}}


\author{Zhenyu Zhao\thanks{School of Mathematics and Statistics, Shandong University of Technology, Zibo, 255049, China,({\tt Zhenyu\_Zhao@sdut.edu.cn}).} \and Yanfei Wang \thanks{Key Laboratory of Deep Petroleum Intelligent Exploration and Development, Institute of Geology and Geophysics, Chinese Academy of Sciences, Beijing, 100029,China,({\tt yfwang@mail.iggcas.ac.cn}).}\and Xinran Liu \thanks{School of Mathematics and Statistics, Shandong University of Technology, Zibo, 255049, China.}}

\begin{document}

\maketitle

\begin{abstract}
We present a computationally efficient algorithm for stable numerical differentiation from noisy, uniformly-sampled data
on a bounded interval. The method combines multi-interval Fourier extension approximations with an adaptive domain
partitioning strategy: a global precomputation of local Fourier sampling matrices and their thin SVDs is reused
throughout a recursive bisection procedure that selects locally-resolved Fourier fits. Each accepted subinterval stores
a compact set of Fourier coefficients that are subsequently used to reconstruct the derivative via a precomputed
differentiation operator. The stopping criterion balances fitting error and an explicit noise-level bound, and the
algorithm automatically refines the partition where the function exhibits rapid oscillations or boundary activity.
Numerical experiments demonstrate significant improvements over existing methods, achieving accurate derivative reconstruction for challenging functions. The approach provides a robust framework for ill-posed differentiation problems while maintaining computational efficiency.
\end{abstract}

\begin{keywords}
Numerical Differentiation, Fourier Extension, Fourier Continuation, Adaptive Partitioning, Regularization
\end{keywords}

\begin{AMS}
42A10, 65T40, 65T50
\end{AMS}

\pagestyle{myheadings}
\thispagestyle{plain}
\markboth{Z. Y. ZHAO and Y.F. Wang and X. R. Liu}{ Fast numerical derivatives  based on multi-interval Fourier extension}
\section{Introduction}
Numerical differentiation, the process of approximating derivatives from discrete and often noisy function values, constitutes a fundamental yet inherently ill-posed problem in scientific computing. Its significance stems from pervasive applications across diverse fields,  such as  heat flux estimation\cite{bazan2019}, volatility modeling in finance \cite{2005Tikhonov,2006Recovering}, solving some differential and integral equations \cite{1991Abel,2023Accurate,egidi2021},  and discontinuity reconstruction and edge detection\cite{2002A,2006Detection}. The core challenge lies in the intrinsic instability: minute perturbations or noise in the input function data inevitably amplify, leading to large, unbounded errors in the computed derivatives. This ill-posedness necessitates the application of specialized regularization techniques to recover stable and meaningful approximations.

A wide spectrum of computational strategies has been developed to address this challenge.
Traditional approaches include finite difference schemes and polynomial interpolation \cite{qu1996,li2005,sergeyev2011},
which, while simple, exhibit pronounced sensitivity to noise and grid non-uniformity.
Spline-based methods \cite{wei2006,hao2012,dellaccio2024,moniri2023} offer greater robustness by leveraging piecewise polynomial smoothing,
though controlling the inherent smoothness regularization remains critical.
Mollification techniques \cite{murio1993,friedrichs1944} stabilize differentiation through convolution with smooth kernels,
effectively damping high-frequency noise.
Methods utilizing reproducing kernel Hilbert spaces (RKHS) or radial basis functions (RBF) \cite{davydov2016,wei2007,davydov2018,wei2006}
provide flexibility for scattered data approximation but depend heavily on kernel selection and regularization parameter tuning.
Tikhonov regularization, in its standard and modified forms \cite{zhao2021,chen2018,nayak2020,egidi2021,bozkurt2021},
remains a dominant paradigm, penalizing undesirable solution norms within a least-squares minimization of the data misfit.
Spectral methods, particularly those employing truncated Fourier or Legendre series \cite{bazan2019,zhao2010,zhao2010b,semenova2022,zhang2024,zhao2021}, achieve high accuracy for smooth periodic functions, benefiting from convergence rates that adapt to the underlying smoothness. Integration-based approaches \cite{huang2014,wang2010,wang2011} exploit the inherent stability of the inverse operation (integration), while partial differential equation (PDE)-based methods \cite{qiu2018,wang2015} reformulate differentiation as an inverse source problem, leveraging PDE theory for regularization, albeit often at the cost of increased complexity.

Among these, Fourier-based spectral methods are particularly attractive for their potential high-order convergence on periodic smooth data. To overcome their limitation to periodic functions, Fourier extension methods have emerged as a powerful technique \cite{huybrechs2010fourier,adcock2019frames,zhao2024fast,zhao2025new,zhao2025local}. The core idea involves extending the target function $f$, defined on a limited interval $[-1, 1]$ and belonging to a Sobolev space $H^p(-1, 1)$, to a function $f_e$ that is periodic on a larger interval $[-T, T](T>1)$ , such that $f_e(x) = f(x)$ on $[-1, 1]$. This extension mitigates the Gibbs phenomenon by effectively transferring the non-periodicity boundary issues to the larger domain. Building upon this foundation, one of the authors of this paper and his collaborators  introduced a significant advancement by incorporating a novel \emph{super-order regularization} strategy within the Tikhonov framework \cite{chen2018}. Their method seeks an approximation $g = \mathscr{F}\mathbf{v}$ (where $\mathscr{F}$ maps the coefficient vector $\mathbf{v}$ to a trigonometric series) to the noisy data $f^\delta$ on $[-1, 1]$ by minimizing the functional:
\begin{equation}
\Phi(\mathbf{v}) = \| \mathscr{F}\mathbf{v} - f^{\delta} \|_{L^2(-1,1)}^2 + \alpha  \| \mathscr{R}\mathbf{v} \|_{l^2}^2.
\end{equation}
The key innovation lies in the regularization operator $\mathscr{R}$, defined as
$$\mathscr{R}\mathbf{v} = (c_0, e^{\frac{\pi}{2}}c_1, e^{\frac{\pi}{2}}s_1, \ldots, e^{\frac{n\pi}{2}}c_n, e^{\frac{n\pi}{2}}s_n, \ldots)^T,$$
which corresponds to penalizing the $L^2(-2,2)$ norm of $\sum_{l=0}^{\infty} D^l g / l!$. This super-order penalty imposes an exponentially weighted constraint on \emph{all} derivatives simultaneously. Suppose that the noisy data satisfy the condition
$$\|f-f^{\delta}\|_{L^2(-1,1)}\leq \delta,$$
then the regularization parameter $\alpha$ is selected adaptively using the discrepancy principle ($\| \mathscr{F}\mathbf{v}_{\alpha}^{\delta} - f^{\delta} \|_{L^2(-1,1)} = C\delta,  C>1$). Theoretically, this approach achieves order-optimal convergence rates $$\| D^k (f^{\delta} - f) \|_{L^2(-1,1)} = \mathcal{O}(\delta^{\frac{p-k}{p}})$$ for $k \leq p$ \emph{without requiring explicit knowledge of the smoothness index $p$}.

While the Fourier extension method with super-order regularization demonstrates strong potential--particularly in effectively addressing non-periodic functions and achieving adaptive convergence---several limitations remain. Both numerical experiments and theoretical analysis indicate the need for further improvement in the following aspects:

\begin{itemize}
\item {\bf Difficulty with high-frequency functions:} The regularization penalty suppresses high-frequency components, and floating-point precision imposes additional constraints. As a result, the method fails to accurately approximate derivatives of high-frequency functions such as $\cos(100x^2)$, even when data noise is negligible.

\item {\bf Inhibited accuracy for low-frequency functions:} Solving the minimization problem requires forming the corresponding normal equation. According to [2], the best attainable approximation accuracy is $\mathcal{O}(\sqrt{\epsilon})$, where $\epsilon$ is machine precision. This error is further magnified during differentiation, making it difficult to achieve higher accuracy even for smooth, low-frequency functions.

\item {\bf Lack of fast algorithms:} To date, this class of methods lacks efficient implementations, which limits its practical applicability due to high computational costs.
\end{itemize}

To overcome these challenges, the authors recently proposed a local Fourier extension algorithm \cite{zhao2025local}. This approach avoids solving normal equations by truncating the singular values of the operator $\mathscr{F}$ directly, thereby improving the approximation accuracy to $\mathcal{O}(\epsilon)$. Furthermore, it leverages the rapid convergence of Fourier approximations to achieve high accuracy with few nodes for low-frequency functions. For high-frequency functions, interval partitioning effectively reduces local frequency, and the reuse of a unified discrete matrix across subdomains enhances computational efficiency. In this paper, we apply this method to numerical differentiation, addressing the three key limitations outlined above.

The remainder of this paper is organized as follows. In Section~\ref{sec:method} we introduce the multi-interval Fourier
extension formulation of the numerical differentiation problem and present the main regularization ideas that underpin our approach.
Section~\ref{sec:implementation} is devoted to the numerical implementation: we describe the uniform sampling discretization, the precomputation of the local Fourier bases and their singular value decompositions, and the adaptive domain partitioning strategy used to produce stable local fits. In Section~\ref{sec:tests} we summarize a comprehensive set of numerical experiments that
illustrate the method's accuracy and robustness to noise for a variety of smooth and highly oscillatory test functions.
Finally, Section~\ref{sec:conclusion} collects concluding remarks and possible directions for future work.
\section{Numerical Differentiation Method Based on Multi-Interval Fourier Extension\label{sec:method}}

\subsection{Single-Interval Generalized Truncated Singular Value Method}

Let $\Omega = [0, 2\pi]$, $\Lambda = \left[0, \frac{2\pi}{T} \right]$ with $T > 1$. Define  the system
\begin{equation}
\Phi = \{\phi_\ell\}_{\ell \in \mathbb{Z}}, \quad \phi_0(t) = \frac{1}{\sqrt{2}},\quad \phi_\ell(t) = e^{\mathrm{i}\ell t},\quad \ell = \pm1, \pm2, \ldots.
\end{equation}
As shown in \cite{huybrechs2010fourier}, this system forms an orthonormal basis for $L^2(\Omega)$ and a frame for $L^2(\Lambda)$.

Define the following operators:
\begin{equation}
\begin{aligned}
 &\mathcal{F}: \ell^2 \rightarrow L^2(\Lambda), \quad {c} = \{\hat{c}_\ell\}_{\ell=-\infty}^\infty \mapsto \sum_{\ell=-\infty}^{\infty} \hat{c}_\ell \phi_\ell, \\
 &\mathcal{P}_N: \ell^2 \rightarrow \ell^2, \quad {c} \mapsto (\ldots, 0, \hat{c}_{-N}, \ldots, \hat{c}_N, 0, \ldots)^\top, \\
 &B: \mathcal{D}(B) \subset \ell^2 \rightarrow \ell^2, \quad {c} \mapsto \left\{ \sqrt{1 + |\ell|^{2p}}\, \hat{c}_\ell \right\}_{\ell=-\infty}^{\infty}, \\
 &W: \mathcal{D}(W) \subset \ell^2 \rightarrow \ell^2, \quad {c} \mapsto \left\{ e^{|\ell|} \hat{c}_\ell \right\}_{\ell=-\infty}^{\infty}.
\end{aligned}
\end{equation}

Consider the operator equation
\begin{equation}\label{compactoperatoreq}
\mathcal{F} {c} = g,
\end{equation}
and its perturbed form
\begin{equation}\label{compactoperatoreqpert}
\mathcal{F} {c} = g^\delta,
\end{equation}
with the noise assumption
\begin{equation}\label{errorcond1}
\|g - g^\delta\|_{L^2(\Lambda)} \leq \delta.
\end{equation}

By the Sobolev extension theorem, for $g \in H^p(\Lambda)$, equation \eqref{compactoperatoreq} admits multiple solutions. Among them, we define:

- The **least-squares solution** ${c}^\star \in \ell^2$ satisfies
\[
\|\mathcal{F} {c}^\star - g\|_{L^2(\Lambda)} = \inf_{{c} \in \ell^2} \|\mathcal{F} {c} - g\|_{L^2(\Lambda)}.
\]

- The **$B$-weighted best approximation** ${c}_B^\dag \in \ell^2$ satisfies
\[
\|{c}_B^\dag\|_{\ell^2} = \inf\left\{ \|B {c}^\star\|_{\ell^2} \,\middle|\, {c}^\star \text{ is a least-squares solution of \eqref{compactoperatoreq}} \right\}.
\]

Let $G = \mathcal{F} W^{-1}$, which is clearly a compact operator. Let $(\sigma_i, v_i, u_i)$ be a singular system of $G$ with
\[
\sigma_1 \geq \sigma_2 \geq \cdots \geq \sigma_k \geq \cdots > 0.
\]
For any $g \in L^2(\Lambda)$, define the truncated inverse operator:
\begin{equation}
\mathcal{T}_k g = \sum_{i=1}^{k} \frac{1}{\sigma_i} \langle g, u_i \rangle v_i.
\end{equation}
It holds that
\begin{equation}\label{Tnorm}
\|\mathcal{T}_k\| \leq \frac{1}{\sigma_k}.
\end{equation}

We further have the following estimate:
\begin{lemma}
\label{lem:G-estimates}
The following bounds hold:
\begin{equation}\label{operatorest}
\|I - G \mathcal{T}_k\| \leq 1, \quad \|(I - G \mathcal{T}_{k-1}) G\| \leq \sigma_k.
\end{equation}
\end{lemma}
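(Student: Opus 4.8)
The plan is to exploit the singular value decomposition of $G$ so that both operators appearing in \eqref{operatorest} reduce to explicit projections or to diagonal operators in the singular basis. First I would record the defining relations of the singular system, namely $G v_i = \sigma_i u_i$ and $G^* u_i = \sigma_i v_i$, with $\{v_i\}$ orthonormal in $\ell^2$ and $\{u_i\}$ orthonormal in $L^2(\Lambda)$, keeping in mind that $\{v_i\}$ is an orthonormal basis of $(\ker G)^\perp$ and $\{u_i\}$ an orthonormal basis of $\overline{\mathrm{range}(G)}$.

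The first step is to identify the composite $G\mathcal{T}_k$. Applying $G$ to the definition of $\mathcal{T}_k$ and using $G v_i = \sigma_i u_i$, I obtain
\[
G\mathcal{T}_k g = \sum_{i=1}^{k} \frac{1}{\sigma_i}\langle g, u_i\rangle\, G v_i = \sum_{i=1}^{k} \langle g, u_i\rangle u_i =: P_k g,
\]
so that $G\mathcal{T}_k = P_k$ is precisely the orthogonal projection of $L^2(\Lambda)$ onto $\mathrm{span}\{u_1,\dots,u_k\}$. Consequently $I - G\mathcal{T}_k = I - P_k$ is again an orthogonal projection (onto the orthogonal complement), and since every orthogonal projection $P$ satisfies $0 \le I - P \le I$ in the operator order, its norm is at most one. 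This yields the first bound.

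For the second bound I would pass to the coefficient side. Take an arbitrary $c \in \ell^2$ and decompose it along $\ker G$, so that the component in $\ker G$ contributes nothing and the coefficients $\langle c, v_j\rangle$ are unaffected by it (since $v_j \perp \ker G$); hence $G c = \sum_j \sigma_j \langle c, v_j\rangle u_j$. Applying $I - P_{k-1}$ removes exactly the terms with $j \le k-1$, leaving
\[
(I - G\mathcal{T}_{k-1}) G c = \sum_{j \ge k} \sigma_j \langle c, v_j\rangle u_j.
\]
Taking norms and using orthonormality of $\{u_j\}$,
\[
\|(I - G\mathcal{T}_{k-1}) G c\|^2 = \sum_{j \ge k} \sigma_j^2\, |\langle c, v_j\rangle|^2 \le \sigma_k^2 \sum_{j \ge k} |\langle c, v_j\rangle|^2 \le \sigma_k^2 \|c\|^2,
\]
where the first inequality uses the monotonicity $\sigma_j \le \sigma_k$ for $j \ge k$ and the last is Bessel's inequality. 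Dividing by $\|c\|$ and taking the supremum gives $\|(I - G\mathcal{T}_{k-1}) G\| \le \sigma_k$.

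The computations are routine once the singular value decomposition is invoked; the only point that genuinely requires care is the bookkeeping around $\ker G$, since $G$ is merely compact and its range need not be closed. I would therefore be explicit that $\{v_i\}$ spans $(\ker G)^\perp$ rather than all of $\ell^2$ and that $\{u_i\}$ spans only $\overline{\mathrm{range}(G)}$, as this is exactly what legitimizes the expansion of $Gc$ and the projection interpretation of $G\mathcal{T}_k$. No deeper difficulty is expected.
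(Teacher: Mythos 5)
Your proof is correct and follows essentially the same route as the paper's: expand in the singular system of $G$, observe that $G\mathcal{T}_k$ acts as the orthogonal projection onto $\mathrm{span}\{u_1,\dots,u_k\}$, and bound the tail of $(I-G\mathcal{T}_{k-1})Gc$ by $\sigma_k$ using monotonicity of the singular values. The only difference is cosmetic: you handle the $\ker G$ / $\overline{\mathrm{range}(G)}$ bookkeeping explicitly (which is slightly more careful than the paper's direct expansion of $g$ in the $\{u_i\}$), but the core argument is identical.
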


\begin{proof}
For any \( g \in L^2(\Lambda) \), write
\[
g = \sum_{i=1}^\infty \langle g, u_i \rangle u_i.
\]
Then we can get
\[
(I - G \mathcal{T}_{k-1}) g = \sum_{i=k}^\infty \langle g, u_i \rangle u_i,
\]
and
\[
\| (I - G \mathcal{T}_{k-1}) g \|^2 = \sum_{i=k}^\infty |\langle g, u_i \rangle|^2 \leq \|g\|^2.
\]
Therefore,
\[
\| I - G \mathcal{T}_{k-1} \| \leq 1.
\]

Moreover, for any unite vector \( {c}\in \mathcal{D}(G) \), write
\[
{c}= \sum_{i=1}^\infty \langle {c}, v_i \rangle v_i, \quad \text{so that} \quad G c= \sum_{i=1}^\infty \sigma_i \langle c, v_i \rangle u_i.
\]
It can be deduced that
\[
(I - G \mathcal{T}_{k-1}) G c = \sum_{i=k}^\infty \sigma_i \langle c, v_i \rangle u_i,
\]
and hence,
\[
\| (I - G \mathcal{T}_{k-1}) G c \|^2 = \sum_{i=k}^\infty \sigma_i^2 |\langle c, v_i \rangle|^2
\leq \sigma_k^2 \sum_{i=k}^\infty |\langle c, v_i \rangle|^2 \leq \sigma_k^2 \|c\|^2.
\]
We conclude that
\[
\| (I - G \mathcal{T}_{k-1}) G \| \leq \sigma_k.
\]

\end{proof}

We define the generalized truncated SVD (GTSVD) solution to \eqref{compactoperatoreqpert} as
\begin{equation}\label{TSVDsol}
{c}^{\delta, \eta} = W^{-1} \mathcal{T}_{\eta} g^\delta,
\end{equation}
where the truncation parameter $\eta$ is chosen according to the discrepancy principle:
\begin{equation}\label{discreprin}
\|\mathcal{F} {c}^{\delta, \eta} - g^\delta\|_{L^2(\Lambda)} \leq \tau \delta < \|\mathcal{F} {c}^{\delta, \eta - 1} - g^\delta\|_{L^2(\Lambda)}, \quad \tau > 1.
\end{equation}
Then we set
\begin{equation}\label{approxtog}
\mathbf{g}_{\eta}^\delta=\mathcal{Q}_{\eta}^\delta g^{\delta}: = \mathcal{F}{c}^{\delta, \eta}
\end{equation}
as the approximation to $g$.

We also define
\begin{equation}\label{defmid}
{c}_{B, N} = \mathcal{P}_N {c}_B^\dag, \quad {c}^{\eta} = W^{-1} \mathcal{T}_{\eta} g, \quad {c}_N^{\eta} = W^{-1} \mathcal{T}_{\eta} \mathcal{F} {c}_{B,N}.
\end{equation}
Since $g \in H^p(\Lambda)$, there exists a constant $E > 0$ such that
\[
\|B {c}_B^\dag\|_{\ell^2} \leq E.
\]
And we have the following result:
\begin{lemma}
The approximants ${c}_B^\dag$ and ${c}_{B,N}$ satisfy
\begin{equation}\label{projest}
\|{c}_B^\dag - {c}_{B,N}\|_{\ell^2} \leq N^{-p} E, \quad \|W {c}_{B,N}\|_{\ell^2} \leq C_N E,
\end{equation}
where
\begin{equation}\label{paraCN}
C_N = \max\left(1, \frac{e^N}{N^p}\right).
\end{equation}
\end{lemma}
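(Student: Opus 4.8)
The plan is to exploit that $B$, $W$, and $\mathcal{P}_N$ are all diagonal Fourier-multiplier operators, so that both estimates collapse to scalar, coefficientwise inequalities on the Fourier coefficients $\{\hat c_\ell\}_{\ell\in\mathbb{Z}}$ of $c_B^\dag$. The defining relation $c_{B,N}=\mathcal{P}_N c_B^\dag$ means that $c_B^\dag-c_{B,N}$ is simply the tail $\{\hat c_\ell\}_{|\ell|>N}$, while $W c_{B,N}$ is supported on $|\ell|\le N$ with entries $e^{|\ell|}\hat c_\ell$. Throughout I would use only the standing hypothesis $\|B c_B^\dag\|_{\ell^2}^2=\sum_{\ell}(1+|\ell|^{2p})|\hat c_\ell|^2\le E^2$.

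For the first inequality I would estimate the tail directly. For $|\ell|>N$ one has $\sqrt{1+|\ell|^{2p}}\ge|\ell|^p>N^p$, hence $|\hat c_\ell|^2\le N^{-2p}(1+|\ell|^{2p})|\hat c_\ell|^2$. Summing over $|\ell|>N$ and then enlarging the range of summation to all $\ell$ gives
\[
\|c_B^\dag-c_{B,N}\|_{\ell^2}^2=\sum_{|\ell|>N}|\hat c_\ell|^2\le N^{-2p}\sum_{\ell}(1+|\ell|^{2p})|\hat c_\ell|^2\le N^{-2p}E^2,
\]
which yields the claim after taking square roots. This step is a routine tail bound and should present no difficulty.

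For the second inequality I would factor the weight as $e^{|\ell|}=w(\ell)\sqrt{1+|\ell|^{2p}}$, where $w(\ell)=e^{|\ell|}/\sqrt{1+|\ell|^{2p}}$, so that
\[
\|W c_{B,N}\|_{\ell^2}^2=\sum_{|\ell|\le N}w(\ell)^2(1+|\ell|^{2p})|\hat c_\ell|^2\le\Big(\max_{|\ell|\le N}w(\ell)\Big)^2\|B c_{B,N}\|_{\ell^2}^2,
\]
and since truncation only discards nonnegative terms we have $\|B c_{B,N}\|_{\ell^2}\le\|B c_B^\dag\|_{\ell^2}\le E$. It then remains to prove the scalar estimate $\max_{0\le m\le N}w(m)\le C_N$ for the continuous envelope $w(m)=e^m/\sqrt{1+m^{2p}}$. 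Here I would use $\sqrt{1+m^{2p}}\ge\max(1,m^p)$ to reduce to analyzing $e^m/m^p$, whose logarithmic derivative $1-p/m$ shows that it decreases on $(0,p)$ and increases on $(p,\infty)$; the endpoint values $w(0)=1$ and $w(N)\le e^N/N^p$ then produce the two cases appearing in $C_N=\max(1,e^N/N^p)$.

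I expect this scalar step to be the main obstacle. The envelope $w$ is genuinely non-monotone—its logarithm is concave and then convex—so one cannot invoke a blanket "maximum at an endpoint" principle; instead one must verify that, in the regime of interest where $N$ is not too small, the exponentially growing endpoint value $w(N)$ dominates any interior critical value, so that the bound by $\max(1,e^N/N^p)$ indeed holds. Confirming that the interior maximum never exceeds this envelope (and handling the small-$N$ constant carefully) is the only delicate part of the argument.
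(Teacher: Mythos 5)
Your approach is the same coefficientwise comparison that the paper itself uses, and your first inequality is complete and essentially identical to the paper's proof. The real issue is the second inequality, and your instinct about where the difficulty lies is exactly right --- in fact, the ``delicate part'' you flagged cannot be resolved with the constant as stated. The scalar bound you need, namely $e^{m}\leq C_N\sqrt{1+m^{2p}}$ for all integers $0\leq m\leq N$ with $C_N=\max\left(1,e^N/N^p\right)$, is false in general: take $p=10$ and $N=5$, so that $e^N/N^p=e^{5}/5^{10}\approx 1.5\times 10^{-5}$ and hence $C_N=1$, while at $m=1$ one has $e/\sqrt{2}\approx 1.92>1$. A coefficient vector with all its mass at $\ell=\pm 1$ then violates the claimed bound $\|W c_{B,N}\|_{\ell^2}\leq C_N E$ with $E=\|Bc\|_{\ell^2}$. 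Be aware that the paper's own proof does not help you here: it consists of a one-line assertion of precisely this coefficientwise inequality, with no justification, so the printed lemma is simply incorrect when $N$ is small relative to $p$.

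The good news is that the endpoint argument you outlined does close the proof, only with a slightly larger constant. Since $\psi(m)=e^m/m^p$ satisfies $\psi'(m)=e^m m^{-p-1}(m-p)$, its only interior critical point on $(0,\infty)$ is a \emph{minimum} at $m=p$, so its maximum on $[1,N]$ is attained at an endpoint; combined with $w(0)=1$ and $w(m)\le \psi(m)$ for $m\ge 1$ this gives
\[
\max_{0\leq m\leq N} \frac{e^m}{\sqrt{1+m^{2p}}}\ \leq\ \max\!\left(e,\ \frac{e^N}{N^p}\right),
\]
i.e.\ the lemma holds with $C_N=\max\left(e,e^N/N^p\right)$ in place of $\max\left(1,e^N/N^p\right)$. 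This correction is harmless downstream: in the theorem, $N$ is chosen through $N^{-p}E=\frac{\tau-1}{2}\delta$, so $N\to\infty$ as $\delta\to 0$, and as soon as $N-p\ln N\geq 1$ the two constants coincide; the super-order lemma (Lemma~\ref{lemsuperorder}) only requires the asymptotic bound $\|Wc^\delta\|_{\ell^2}\leq k_2 e^{k_3\delta^{-1/p}}\delta$, which survives unchanged. So finish your write-up with the corrected constant (or restrict to $N$ large enough that $e^N/N^p\geq e$) and note explicitly that the convergence-rate theorem is unaffected; as it stands, your proposal is more careful than the paper's own proof, which silently assumes the false inequality.
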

\begin{proof}First,
  \begin{equation*}
    \begin{aligned}
      \|{ c}^{\dag}_B-{ c}_{B,N}\|_{\ell^2}&=&&\sum_{|\ell|>N}\hat{c}_{\ell}^2\leq \frac{1}{N^{2p}}\sum_{|\ell|>N}(1+{\ell}^{2p})\hat{c}_{\ell}^2\leq  \frac{1}{N^{2p}}\|B{c}^{\dag}_B\|_{\ell^2}^2.
    \end{aligned}
  \end{equation*}
  Moreover,
\begin{equation*}
  \begin{aligned}
     \|W{ c}_{B,N}\|_{\ell^2}=\sum_{\ell=-N}^Ne^{\ell}\hat{c}_{\ell}^2\leq C_N^2\sum_{\ell=-N}^N(1+{\ell}^{2p})\hat{c}_{\ell}^2\leq C_N^2E^2.
  \end{aligned}
\end{equation*}
\end{proof}
\begin{lemma}[\cite{chen2018}]\label{lemsuperorder}
If the vector sequence $\{{c}^\delta\}$ satisfies
\begin{equation}
\|\mathcal{F} {c}^\delta\|_{L^2(\Lambda)} \leq k_1 \delta, \quad \|W {c}^\delta\|_{\ell^2} \leq k_2 e^{k_3 \delta^{-1/p}} \delta, \quad \text{as } \delta \to 0,
\end{equation}
for some nonnegative constants $k_1, k_2, k_3$, then there exists a constant $E_0$ such that
\begin{equation}
\|\mathcal{F} (B {c}^\delta)\|_{L^2(\Lambda)} \leq E_0.
\end{equation}
\end{lemma}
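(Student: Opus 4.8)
The plan is to split $c^\delta$ into low- and high-frequency parts at a $\delta$-dependent cutoff and to exploit that $\mathcal{F}$ is the restriction to $\Lambda$ of the \emph{unitary} synthesis map onto the larger interval $\Omega\supset\Lambda$. Since $\{\phi_\ell\}$ is orthonormal on $L^2(\Omega)$, one has the free upper bound $\|\mathcal{F} x\|_{L^2(\Lambda)}\le\|\mathcal{F} x\|_{L^2(\Omega)}=\|x\|_{\ell^2}$ for every $x\in\ell^2$. I would fix the cutoff $L=\lceil k_3\,\delta^{-1/p}\rceil$ and write $c^\delta=\mathcal{P}_L c^\delta+(I-\mathcal{P}_L)c^\delta$, where $\mathcal{P}_L$ projects onto the modes $|\ell|\le L$, so that $B c^\delta$ splits accordingly. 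The whole argument is an interpolation between the strong data bound $\|\mathcal{F} c^\delta\|_{L^2(\Lambda)}\le k_1\delta$ and the exponentially large but still $\delta$-weighted bound $\|W c^\delta\|_{\ell^2}\le k_2 e^{k_3\delta^{-1/p}}\delta$; the cutoff $L$ is calibrated precisely so that the factor $e^{k_3\delta^{-1/p}}$ is cancelled by $e^{-L}$.

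For the high-frequency part I would estimate, using the free upper bound, $\|\mathcal{F}(B(I-\mathcal{P}_L)c^\delta)\|_{L^2(\Lambda)}\le\|B(I-\mathcal{P}_L)c^\delta\|_{\ell^2}$ and factor each coefficient as $b_\ell\hat c_\ell=(b_\ell e^{-|\ell|})(e^{|\ell|}\hat c_\ell)$ with $b_\ell=\sqrt{1+|\ell|^{2p}}$. Since $b_\ell e^{-|\ell|}$ is eventually decreasing, with $\sup_{|\ell|>L} b_\ell e^{-|\ell|}\le C L^{p}e^{-L}$, pulling this supremum out leaves exactly $\|W c^\delta\|_{\ell^2}$, and the product $L^{p}e^{-L}\cdot e^{k_3\delta^{-1/p}}\delta$ collapses to $k_3^{p}$ up to constants by the choice of $L$. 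Thus the high-frequency contribution is bounded independently of $\delta$.

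For the low-frequency part I would first observe that $q:=\mathcal{F}(\mathcal{P}_L c^\delta)$ is itself small on $\Lambda$: by the triangle inequality $\|q\|_{L^2(\Lambda)}\le\|\mathcal{F} c^\delta\|_{L^2(\Lambda)}+\|\mathcal{F}((I-\mathcal{P}_L)c^\delta)\|_{L^2(\Lambda)}$, and the second term is controlled by $\|(I-\mathcal{P}_L)c^\delta\|_{\ell^2}\le e^{-L}\|W c^\delta\|_{\ell^2}\le k_2\delta$, so that $\|q\|_{L^2(\Lambda)}\le(k_1+k_2)\delta$. It then remains to bound $\|\mathcal{F}(B\mathcal{P}_L c^\delta)\|_{L^2(\Lambda)}=\|Bq\|_{L^2(\Lambda)}$, where $Bq$ is the degree-$L$ trigonometric polynomial produced by the Fourier multiplier $b_\ell$. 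If one has an inverse (Bernstein-type) inequality $\|Bq\|_{L^2(\Lambda)}\le C L^{p}\|q\|_{L^2(\Lambda)}$ for trigonometric polynomials of degree $L$ restricted to the subinterval $\Lambda$, then $\|\mathcal{F}(B\mathcal{P}_L c^\delta)\|_{L^2(\Lambda)}\le C L^{p}(k_1+k_2)\delta=Ck_3^{p}(k_1+k_2)$, again bounded. Adding the two contributions yields the claim with $E_0$ an explicit constant depending only on $k_1,k_2,k_3,p$ and $\Lambda$.

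The main obstacle is exactly the inverse inequality used in the low-frequency step. The naive route---passing from $\Lambda$ to the full period $\Omega$, on which $B$ is an exact multiplier of norm $(1+L^{2p})^{1/2}$---is fatal, because recovering $\|q\|_{L^2(\Omega)}$ from $\|q\|_{L^2(\Lambda)}$ costs the reciprocal of the smallest singular value of the degree-$L$ Fourier extension, which grows exponentially in $L$ and would wreck the estimate. One therefore needs a genuinely subinterval Bernstein inequality whose constant scales like $L^{p}$ (the interior, not the Markov $L^{2p}$, rate), reflecting that the endpoints of the arc $\Lambda$ are regular points for trigonometric polynomials. Establishing---or citing---this inequality with the correct power, so that it combines cleanly with the exponential cutoff calibration, is the crux; the remaining estimates are routine.
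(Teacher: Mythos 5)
First, a remark on the comparison itself: the paper offers no proof of this lemma at all --- it is quoted from \cite{chen2018} --- so your argument has to stand entirely on its own. Much of it does. The splitting at $L=\lceil k_3\delta^{-1/p}\rceil$, the free bound $\|\mathcal{F}x\|_{L^2(\Lambda)}\le\|x\|_{\ell^2}$, the tail estimate $\sup_{|\ell|>L}\sqrt{1+|\ell|^{2p}}\,e^{-|\ell|}\le CL^{p}e^{-L}$ giving a high-frequency contribution of order $k_2k_3^{p}$, and the smallness $\|\mathcal{F}(\mathcal{P}_Lc^\delta)\|_{L^2(\Lambda)}\le(k_1+k_2)\delta$ are all correct, and you have also correctly diagnosed why the naive detour through $L^2(\Omega)$ is fatal.

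The gap is exactly at the step you flag as the crux, and it is not a missing citation that could be filled in: the ``subinterval Bernstein inequality'' $\|Bq\|_{L^2(\Lambda)}\le CL^{p}\|q\|_{L^2(\Lambda)}$ is false. For inverse inequalities, a proper arc of the circle behaves exactly like an interval with endpoints, not like the full period: the rate $L$ per derivative holds only on compact subsets of the \emph{interior} of $\Lambda$, while uniformly up to the endpoints the sharp rate is the Markov rate $L^{2}$ per derivative. This is classical: Videnskii's Markov-type inequality for trigonometric polynomials on an arc has sharp constant of order $L^{2}$ in the sup norm, and the $L^{2}$ norm does not help --- for polynomials on an interval the sharp $L^{2}$ Markov constant is also of order $n^{2}$ (E.~Schmidt, 1944), the extremal polynomials concentrating their derivative mass in an $O(n^{-2})$ boundary layer at the endpoint; the same phenomenon transplants to the arc. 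So the endpoints of $\Lambda$ are precisely \emph{not} ``regular points'' in the sense you need. With the true rate $L^{2p}$ in place of $L^{p}$, your low-frequency bound becomes
\[
L^{2p}\,\delta \;\sim\; k_3^{2p}\,\delta^{-1}\;\longrightarrow\;\infty ,
\]
and no recalibration of the cutoff can rescue the scheme: boundedness of the high-frequency term forces (taking logarithms) $L\ge k_3\delta^{-1/p}-\ln(1/\delta)-p\ln L-C$, i.e.\ $L\ge k_3\delta^{-1/p}\bigl(1-o(1)\bigr)$, and for every such $L$ the Markov-rate estimate of the low-frequency term diverges like $\delta^{-1}$. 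The two halves of your argument therefore impose incompatible requirements on $L$. The deeper point is that the exponential ill-conditioning you refused to face in the naive route has not disappeared: it reappears in disguise as the endpoint Markov constant, and it defeats any proof that uses only norms of fixed-degree trigonometric polynomials restricted to $\Lambda$. Establishing the lemma requires a different mechanism (this is what the proof in \cite{chen2018} supplies); as written, your proposal does not prove the statement.
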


\begin{lemma}[\cite{adams1975sobolev}]\label{interpineq}
Let $\Theta = (a, b) \subset \mathbb{R}$ and suppose $f \in H^s(\Theta)$. Then for any $\epsilon \in (0, \epsilon_0]$ and $0 \leq j \leq p$, there exists a constant $C$ (depending on $\epsilon_0$, $j$, and $p$) such that
\begin{equation}
\|f^{(j)}\|_{L^2(\Theta)} \leq C \left( \epsilon \|f^{(p)}\|_{L^2(\Theta)} + \epsilon^{-j/(p-j)} \|f\|_{L^2(\Theta)} \right).
\end{equation}
\end{lemma}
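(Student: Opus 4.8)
The plan is to reduce the inequality to a single reference interval and then recover the full $\epsilon$-dependence by a scaling-and-partition argument. Only the range $0<j<p$ carries content: for $j=0$ the stated bound is immediate, while $j=p$ would involve the undefined exponent $-p/(p-p)$ and is excluded. Everything rests on the qualitative estimate on the unit interval $\Theta_1=(0,1)$,
\begin{equation}\label{refest}
\|h^{(j)}\|_{L^2(\Theta_1)} \le A\,\|h^{(p)}\|_{L^2(\Theta_1)} + B\,\|h\|_{L^2(\Theta_1)},
\end{equation}
valid for every $h\in H^p(\Theta_1)$ with $A,B$ depending only on $j$ and $p$. Granting \eqref{refest}, the remainder is mechanical.

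Given $f\in H^p(\Theta)$ with $\Theta=(a,b)$ and a target value $\epsilon\in(0,\epsilon_0]$, I would partition $\Theta$ into $M$ equal subintervals $I_m=(c_m,c_m+\ell)$ of length $\ell=(b-a)/M$, choosing $M=\lceil (b-a)\,\epsilon^{-1/(p-j)}\rceil$ so that $\ell\asymp\epsilon^{1/(p-j)}$; the cap $\epsilon_0=(b-a)^{p-j}$ guarantees $M\ge 1$. On each $I_m$ I rescale to $\Theta_1$ through $h(y)=f(c_m+\ell y)$, for which the elementary identity $\|h^{(k)}\|_{L^2(\Theta_1)}=\ell^{\,k-1/2}\|f^{(k)}\|_{L^2(I_m)}$ holds. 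Substituting into \eqref{refest} and dividing by $\ell^{\,j-1/2}$ gives the local bound
\begin{equation}
\|f^{(j)}\|_{L^2(I_m)} \le A\,\ell^{\,p-j}\|f^{(p)}\|_{L^2(I_m)} + B\,\ell^{-j}\|f\|_{L^2(I_m)}.
\end{equation}
Squaring, summing over $m$, and taking square roots eliminates the subintervals and yields $\|f^{(j)}\|_{L^2(\Theta)}\le C\,\ell^{\,p-j}\|f^{(p)}\|_{L^2(\Theta)}+C\,\ell^{-j}\|f\|_{L^2(\Theta)}$. Writing $\epsilon=\ell^{\,p-j}$, so that $\ell^{-j}=\epsilon^{-j/(p-j)}$, reproduces precisely the claimed exponents. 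The decisive feature is that \eqref{refest} involves only the extreme orders $0$, $j$, and $p$, so no intermediate derivatives ever re-enter the right-hand side.

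The substance is therefore entirely in \eqref{refest}, which asserts that on a bounded interval the quantity $\|h^{(p)}\|_{L^2}+\|h\|_{L^2}$ is equivalent to the full $H^p$ norm; I expect this to be the main obstacle. It resists a purely elementary pointwise derivation, because integration by parts generates boundary terms in the intermediate derivatives that are not directly controlled. The cleanest route is a compactness (Ehrling) argument: since $H^p(\Theta)$ embeds compactly into $H^j(\Theta)$ by Rellich and $H^j(\Theta)$ embeds continuously into $L^2(\Theta)$, for every $\eta>0$ one has $\|h^{(j)}\|_{L^2}\le\eta\|h\|_{H^p}+C_\eta\|h\|_{L^2}$; summing these over $1\le j\le p-1$ and taking $\eta$ small enough to absorb the intermediate terms into $\|h\|_{H^p}^2=\sum_{k=0}^{p}\|h^{(k)}\|_{L^2}^2$ yields \eqref{refest}. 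The drawback is that this argument is non-constructive and produces no explicit $A,B$; a constructive alternative would establish the one-step case $\|h'\|_{L^2}\le A_0\|h''\|_{L^2}+B_0\|h\|_{L^2}$ by hand and induct, at the cost of tracking boundary contributions. As a sanity check on the exponents, the same inequality on all of $\mathbb{R}$ follows at once from Plancherel together with the pointwise Young bound $|\xi|^{2j}\le\epsilon^{2}|\xi|^{2p}+C\,\epsilon^{-2j/(p-j)}$, which is exactly the structure the scaling argument transfers to $\Theta$. Finally, the hypothesis $\epsilon\le\epsilon_0$ is what both secures $M\ge1$ in the partition and lets the harmless term $\epsilon\|f\|_{L^2}$ be absorbed into $\epsilon^{-j/(p-j)}\|f\|_{L^2}$.
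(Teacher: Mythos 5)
Your proof is essentially correct, but there is nothing in the paper to compare it against: the paper states this lemma purely as a citation to Adams' \emph{Sobolev Spaces} (it is the classical interpolation inequality, Theorem~4.14 of the 1975 edition) and gives no proof of its own. Your argument is the standard self-contained route: an $\epsilon$-free Ehrling-type inequality $\|h^{(j)}\|_{L^2(0,1)} \le A\|h^{(p)}\|_{L^2(0,1)} + B\|h\|_{L^2(0,1)}$ on a reference interval, obtained from the compact embedding of $H^p(0,1)$ into $H^j(0,1)$ with the intermediate derivatives absorbed, followed by partitioning $(a,b)$ into pieces of length $\ell \asymp \epsilon^{1/(p-j)}$ and rescaling each piece via the identity $\|h^{(k)}\|_{L^2(0,1)} = \ell^{\,k-1/2}\|f^{(k)}\|_{L^2(I_m)}$; the squaring--summing step and the exponent bookkeeping are right, and excluding $j=p$ (where the stated exponent degenerates) and the trivial $j=0$ case is a reasonable reading of the statement. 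By contrast, the proof the citation points to is constructive: Adams establishes the one-step interval inequality $\|u'\| \le K\left(\epsilon\|u''\| + \epsilon^{-1}\|u\|\right)$ by the mean value theorem and direct integration and then inducts on the order, which yields explicit constants, whereas your compactness step does not --- a trade-off you correctly flag yourself. Two small repairs you should make: (i) you cannot literally ``write $\epsilon = \ell^{\,p-j}$,'' since $\ell=(b-a)/M$ is quantized; what you actually get from your choice of $M$ and the cap $\epsilon \le (b-a)^{p-j}$ is the two-sided bound $\tfrac12\epsilon^{1/(p-j)} \le \ell \le \epsilon^{1/(p-j)}$, which suffices at the cost of a factor $2^{j}$ in the constant; (ii) if the prescribed $\epsilon_0$ exceeds $(b-a)^{p-j}$, the remaining range $\epsilon \in \left((b-a)^{p-j}, \epsilon_0\right]$ is handled by taking $M=1$, with a constant depending on $\epsilon_0$ exactly as the statement permits. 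Neither point affects the substance.
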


\begin{theorem}
Suppose $g \in H^p(\Lambda)$ and let $\mathbf{g}_{\eta}^\delta$ be defined as in \eqref{approxtog}, with $\delta$ satisfying \eqref{errorcond1}. Then for any $0 \leq j \leq p$, the following convergence rate holds:
\begin{equation}\label{finalerrorest}
\left\| \left(\mathbf{g}_{\eta}^\delta \right)^{(j)} - g^{(j)} \right\|_{L^2(\Lambda)} = \mathcal{O} \left( \delta^{\frac{p - j}{p}} \right).
\end{equation}
\end{theorem}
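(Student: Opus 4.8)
The plan is to control the error $f := \mathbf{g}_\eta^\delta - g$ separately in $L^2(\Lambda)$ and in the $p$-th order Sobolev seminorm, and then to interpolate between these two estimates using Lemma~\ref{interpineq}. Concretely, I will establish
\begin{equation*}
\|f\|_{L^2(\Lambda)} = \mathcal{O}(\delta), \qquad \|f^{(p)}\|_{L^2(\Lambda)} = \mathcal{O}(1),
\end{equation*}
and apply Lemma~\ref{interpineq} with $s=p$ and the balanced choice $\epsilon = \delta^{(p-j)/p}$. Since then $\epsilon^{-j/(p-j)} = \delta^{-j/p}$, both terms $\epsilon\|f^{(p)}\|$ and $\epsilon^{-j/(p-j)}\|f\|$ collapse to $\mathcal{O}(\delta^{(p-j)/p})$, which is exactly \eqref{finalerrorest}. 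The $L^2$ bound is immediate from the discrepancy principle: since $\mathbf{g}_\eta^\delta=\mathcal{F}c^{\delta,\eta}$ obeys $\|\mathcal{F}c^{\delta,\eta}-g^\delta\|\le\tau\delta$ by \eqref{discreprin} and $\|g^\delta-g\|\le\delta$ by \eqref{errorcond1}, the triangle inequality gives $\|f\|_{L^2(\Lambda)}\le(\tau+1)\delta$.

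The substance lies in the uniform bound on $f^{(p)}$. I use $g=\mathcal{F}c_B^\dagger$ (frame completeness makes the least-squares residual vanish) and the splitting $c^{\delta,\eta}=c_{B,N}+c^\delta$ with $c^\delta:=c^{\delta,\eta}-c_{B,N}$, choosing the truncation level $N\sim\delta^{-1/p}$. With this choice the projection estimate \eqref{projest} gives $\|Wc_{B,N}\|_{\ell^2}\le C_N E$ with $C_N\sim e^{c\delta^{-1/p}}\delta$, which already exhibits the exponential-times-$\delta$ shape required by Lemma~\ref{lemsuperorder}, while $\|\mathcal{F}(c_B^\dagger-c_{B,N})\|\le\|\mathcal{F}\|N^{-p}E=\mathcal{O}(\delta)$ controls the projection residual. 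I then verify the two hypotheses of Lemma~\ref{lemsuperorder} for $c^\delta$: the residual bound $\|\mathcal{F}c^\delta\|\le k_1\delta$ follows by inserting $g$ and $\mathcal{F}c_{B,N}$ and combining the discrepancy residual, the noise level, and the projection residual; the weighted bound $\|Wc^\delta\|_{\ell^2}\le k_2 e^{k_3\delta^{-1/p}}\delta$ follows from $\|Wc_{B,N}\|\le C_N E$ together with $\|Wc^{\delta,\eta}\|_{\ell^2}=\|\mathcal{T}_\eta g^\delta\|\le\sigma_\eta^{-1}\|g^\delta\|$.

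The key---and hardest---step is therefore the lower bound on the truncation singular value $\sigma_\eta$. For this I exploit that the discrepancy principle stops just after the residual falls below $\tau\delta$, i.e.\ $\|(I-G\mathcal{T}_{\eta-1})g^\delta\|>\tau\delta$. Writing $g^\delta=G(Wc_{B,N})+(g-\mathcal{F}c_{B,N})+(g^\delta-g)$ and applying the two operator bounds of Lemma~\ref{lem:G-estimates}, namely $\|(I-G\mathcal{T}_{\eta-1})G\|\le\sigma_\eta$ and $\|I-G\mathcal{T}_{\eta-1}\|\le1$, yields
\begin{equation*}
\tau\delta < \sigma_\eta\|Wc_{B,N}\|_{\ell^2} + \|\mathcal{F}(c_B^\dagger-c_{B,N})\| + \delta.
\end{equation*}
Because $N\sim\delta^{-1/p}$ forces the projection residual below $\tfrac{\tau-1}{2}\delta$, this rearranges to $\sigma_\eta>\tfrac{(\tau-1)\delta}{2C_N E}$, whence $\sigma_\eta^{-1}\lesssim C_N/\delta\sim e^{c\delta^{-1/p}}$. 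Substituting into $\|Wc^{\delta,\eta}\|\le\sigma_\eta^{-1}\|g^\delta\|$ and using $\|g^\delta\|=\mathcal{O}(1)$ gives $\|Wc^\delta\|_{\ell^2}\lesssim e^{c\delta^{-1/p}}$, which is $\le k_2 e^{k_3\delta^{-1/p}}\delta$ once $k_3>c$ (the factor $e^{(c-k_3)\delta^{-1/p}}$ decays faster than any power of $\delta$).

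Lemma~\ref{lemsuperorder} then delivers $\|\mathcal{F}(Bc^\delta)\|\le E_0$, a uniform bound on the $H^p$-content of $\mathcal{F}c^\delta$; since the weight $\sqrt{1+|\ell|^{2p}}$ in $B$ dominates the differentiation symbol $|\ell|^p$, this controls $\|(\mathcal{F}c^\delta)^{(p)}\|_{L^2(\Lambda)}$. Combining with $\|(\mathcal{F}c_{B,N})^{(p)}\|\le\|g^{(p)}\|+\|(\mathcal{F}(c_B^\dagger-c_{B,N}))^{(p)}\|=\mathcal{O}(1)$---the latter again bounded by $E$ through \eqref{projest}---gives $\|f^{(p)}\|_{L^2(\Lambda)}=\mathcal{O}(1)$, and the interpolation step concludes the proof. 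I expect the main obstacle to be exactly the balancing inside the $\sigma_\eta$ lower bound: $N$ must be large enough that the projection residual is swamped by the noise, yet the resulting $C_N$ must keep $\|Wc^\delta\|$ inside the narrow exponential window that Lemma~\ref{lemsuperorder} tolerates.
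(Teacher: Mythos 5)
Your proposal is correct and follows essentially the same route as the paper's own proof: the same splitting through $c_{B,N}$ with $N$ chosen so that $N^{-p}E\sim\delta$, the same lower bound on $\sigma_\eta$ obtained from the discrepancy principle at step $\eta-1$ combined with Lemma~\ref{lem:G-estimates}, verification of the two hypotheses of Lemma~\ref{lemsuperorder}, and the final interpolation via Lemma~\ref{interpineq}. The only (harmless) deviation is your cruder weighted bound $\|W(c^{\delta,\eta}-c_{B,N})\|_{\ell^2}\lesssim \sigma_\eta^{-1}\|g^\delta\|+C_NE=\mathcal{O}\bigl(e^{c\delta^{-1/p}}\bigr)$ in place of the paper's $\mathcal{O}\bigl(e^{c\delta^{-1/p}}\delta\bigr)$, obtained there by routing through $c_N^\eta=W^{-1}\mathcal{T}_\eta\mathcal{F}c_{B,N}$, which you correctly absorb by taking $k_3>c$ since the surplus factor $e^{(c-k_3)\delta^{-1/p}}$ decays faster than any power of $\delta$.
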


\begin{proof}
Using \eqref{errorcond1}, \eqref{discreprin}, \eqref{projest}, and the triangle inequality, we obtain
\begin{equation}\label{est1}
\begin{aligned}
\|\mathcal{F} ({c}^{\delta,\eta} - {c}_{B,N})\|_{L^2(\Lambda)}
&\leq \|\mathcal{F} {c}^{\delta,\eta} - g^\delta\|_{L^2(\Lambda)} + \|g^\delta - g\|_{L^2(\Lambda)} + \|g - \mathcal{F} {c}_{B,N}\|_{L^2(\Lambda)} \\
&\leq (\tau + 1)\delta + \| {c}_B^\dag - {c}_{B,N} \|_{\ell^2} \leq (\tau + 1)\delta + N^{-p}E.
\end{aligned}
\end{equation}
Using \eqref{Tnorm}, \eqref{operatorest}, and \eqref{projest}, we estimate
\begin{equation}\label{est2}
\begin{aligned}
\|S({c}^{\delta,\eta} - {c}_{B,N})\|_{\ell^2}
&\leq \|W({c}^{\delta,\eta} - {c}^{\eta}_N)\|_{\ell^2} + \|W({c}^{\eta}_N - {c}_{B,N})\|_{\ell^2} \\
&\leq \frac{1}{\sigma_{\eta}} \left( \|g^\delta - g\|_{L^2(\Lambda)} + \|g - \mathcal{F} {c}_{B,N}\|_{L^2(\Lambda)} \right) + C_N E \\
&\leq \frac{1}{\sigma_{\eta}} (\delta + N^{-p}E) + C_N E.
\end{aligned}
\end{equation}
Let $\mathcal{U} = I - G \mathcal{T}_{\eta - 1}$. Note that
\[
g^\delta - \mathcal{F} \mathbf{c}^{\delta, \eta - 1} = \mathcal{U} g^\delta,
\]
so
\begin{equation*}
\begin{aligned}
\tau \delta &< \|g^\delta - \mathcal{F} \mathbf{c}^{\delta, \eta - 1}\|_{L^2(\Lambda)} \\
&\leq \|\mathcal{U} (g^\delta - g)\|_{L^2(\Lambda)} + \|\mathcal{U}(g - \mathcal{F} \mathbf{c}_{B,N})\|_{L^2(\Lambda)} + \|\mathcal{U} \mathcal{F} \mathbf{c}_{B,N}\|_{L^2(\Lambda)} \\
&\leq \delta + N^{-p}E + \sigma_{\eta} C_N E.
\end{aligned}
\end{equation*}
Choose $N$ such that
\begin{equation}\label{est3}
N^{-p} E = \frac{\tau - 1}{2} \delta,
\end{equation}
then
\begin{equation}\label{est4}
\frac{\delta}{\sigma_{\eta}} \leq \frac{2 C_N E}{\tau - 1}.
\end{equation}
Combining \eqref{est1}--\eqref{est4} and using the bound \eqref{paraCN}, we conclude that there exist constants $k_1, k_2, k_3$ such that
\begin{equation}
\begin{aligned}
\|\mathcal{F}({c}^{\delta,\eta} - {c}_{B,N})\|_{L^2(\Lambda)} &\leq k_1 \delta, \\
\|W({c}^{\delta,\eta} - {c}_{B,N})\|_{\ell^2} &\leq k_2 e^{k_3 \delta^{-1/p}} \delta.
\end{aligned}
\end{equation}
Then, by Lemma~\ref{lemsuperorder}, there exists a constant $E_0$ such that
\begin{equation}
\|\mathcal{F} B({c}^{\delta,\eta} - {c}_{B,N})\|_{L^2(\Lambda)} \leq E_0.
\end{equation}
Therefore,
\begin{equation}\label{finalest1}
\begin{aligned}
\|g^{(p)} - (\mathbf{g}^{\delta}_{\eta})^{(p)}\|_{L^2(\Lambda)}
&\leq \|\mathcal{F} B ({c}^{\delta,\eta} - {c}_B^\dag)\|_{L^2(\Lambda)} \\
&\leq \|\mathcal{F} B ({c}^{\delta,\eta} - {c}_{B,N})\|_{L^2(\Lambda)} + \|B {c}_B^\dag\|_{\ell^2} \\
&\leq E_0 + E.
\end{aligned}
\end{equation}
Moreover,
\begin{equation}\label{finalest2}
\|\mathbf{g}_{\eta}^\delta - g\|_{L^2(\Lambda)} \leq \|\mathbf{g}_{\eta}^\delta - g^\delta\|_{L^2(\Lambda)} + \|g^\delta - g\|_{L^2(\Lambda)} \leq (\tau + 1)\delta.
\end{equation}
Finally, applying the interpolation inequality (Lemma~\ref{interpineq}) completes the proof of the theorem.
\end{proof}
\begin{remark}
  According to the regularization theory, the result in \eqref{finalerrorest} is of optimal order, and the design of the method is independent of the smoothness $p$ and is adaptive. However, we need to note that the above method has serious numerical defects. Limited by the machine precision (set as $\epsilon$), we cannot obtain the exact value of the singular value of $\sigma_i< \epsilon$. According to the results in \cite{zhao2025new}, even if $g$ is accurate, we can only obtain the approximate accuracy of
  \begin{equation*}
    \begin{aligned}
     \left\|g-\sum_{\sigma_i\geq\epsilon}\langle g, u_i\rangle \right\|&=&&   \left\|\sum_{\sigma_i<\epsilon}\langle g, u_i\rangle \right\|=   \left\|\sum_{\sigma_i< \epsilon}\langle \mathcal{F}{ c}_B^{\dag}, u_i\rangle \right\|= \left\|\sum_{\sigma_i< \epsilon}\langle{G}(W{c}_B^{\dag}), u_i\rangle \right\|\\
     &=&&\left\|\sum_{\sigma_i< \epsilon}\sigma_i\langle W{c}_B^{\dag}, v_i\rangle \right\|\leq \epsilon\|W{c}_B^{\dag}\|_{\ell^2}.\\
    \end{aligned}
  \end{equation*}
For high-frequency functions, the norm $\|W{ c}_B^{\dag}\|_{\ell^2}$ is very large. We know that $e^{40}\approx 2.35e+17$, while $\epsilon=1e-16$ in general double-precision calculations. That is to say, when the function g contains high-frequency components that cannot be ignored, the above method cannot obtain satisfactory results even from the perspective of approximation, not to mention unstable calculations such as derivatives.
\end{remark}
\subsection{Multi-interval processing}

Let $f\in H^p(a,b)$ be a  non-periodic function defined on the interval $I=[a,b]$ and $f^{\delta}$ be its perturbation measurement data.  Points $a=a_0<a_1<\cdots <a_J=b$ divides $I$ into $J$ subintervals $I_1,I_2,\ldots, I_J$. Let $f_j(x), f_j^{\delta}(x)$ be functions defined on interval $I_j$, satisfying:
\begin{equation}
  f_j(x)=f(x),\quad f^{\delta}_j(x)=f^{\delta}(x),\quad \forall x\in I_j,\quad j=1,2,\ldots,J,
\end{equation}
and
\begin{equation}\label{errorconf}
  \|f_j-f^{\delta}_j\|_{L^2(I_j)}\leq \delta_j.
\end{equation}
Our goal is to obtain an effective approximation  of $f$ through the perturbed  data $f^{\delta}$.

Let $\{g_k(t)\}_{j=1}^J$ and $\{g^{\delta}_k(t)\}_{j=1}^J$ are two families of functions defined on $\Lambda$, such that
\begin{equation}
  g_j(t)=f_j(a_{j-1}+d_j t),\quad  g^{\delta}_j(t)=f^{\delta}_j(a_{j-1}+d_j t),\quad \forall t\in \Lambda,
\end{equation}
where
\begin{equation}
  d_j=\frac{T}{2\pi}(a_j-a_{j-1}).
\end{equation}
Then we have
\begin{equation}
  \|g_j- g^{\delta}_j\|_{L^2(\Lambda)}\leq \sqrt{d_j}\delta_j=:\tilde{\delta}_j.
\end{equation}
Now substitute $g^{\delta}_j$, $j=1,2,...J$ into the right side of equation \eqref{compactoperatoreqpert},   we can get a set of solution vectors from \eqref{TSVDsol} and \eqref{discreprin}:
\begin{equation}
  {c}^{\tilde{\delta}_j,\eta_j},\quad j=1,2,\ldots,J.
\end{equation}
and the corresponding approximations of the function $g_j$:
\begin{equation}
  \mathbf{g}_{\eta_j}^{\tilde{\delta}_j}=\mathcal{Q}_{\eta_j}^{\tilde{\delta}_j}g^{\delta}_j=\mathcal{F}{ c}^{\tilde{\delta}_j,\eta_j},\quad j=1,2,\ldots,J.
\end{equation}
Now we use
\begin{equation}\label{approximationto_f}
  {\bf f}^{\bm \delta}=\left(\mathcal{P}_{\bm \eta,J}^{\bm{\delta}}f^{\delta}\right)(x)=
  \left\{\begin{array}
    {lllll}
    \left(\mathcal{Q}{\eta_j}^{\tilde{\delta}_j}g^{\delta}_j\right)\left(\frac{x-a_j}{d_j}\right),& \forall x\in [a_{j-1},a_k),\\
    \left(\mathcal{Q}{\eta_1}^{\tilde{\delta}_1}g^{\delta}_1\right)\left(2\pi\right), &x=b.
  \end{array}\right.
\end{equation}
as the approximation of $f$ and we have the following estimation.
\begin{theorem}
  Suppose that $f\in H^p(a,b)$ and the condition \eqref{errorconf} holds, ${\bf f}^{\bm \delta}$ is defined by \eqref{approximationto_f}, and let
  \begin{equation}
    \underline{d}=\min_{1\leq j\leq J} d_j,\quad {\bar{\delta}=\max_{1\leq j\leq J}\tilde{\delta}_j},
  \end{equation}
  then we have
  \begin{equation}\label{finalerrorest2}
    \left\|\left({\bf f}^{\bm \delta}\right)^{(k)}-f^{(k)}\right\|_{L^2(a,b)}={\mathcal{O}\left(\underline{d}^{-k}\bar{\delta}^{\frac{p-k}{p}}\right).}
  \end{equation}
\end{theorem}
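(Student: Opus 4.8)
The plan is to reduce the global estimate \eqref{finalerrorest2} to the single-interval theorem already proved, using a partition-of-the-domain argument together with the affine rescaling $x = a_{j-1} + d_j t$ that defines $g_j$ from $f_j$. Since the subintervals $I_j$ cover $(a,b)$ up to the finitely many breakpoints (a set of measure zero), and $(\mathbf{f}^{\bm\delta})^{(k)}$ is defined piecewise as the $k$-th derivative of the accepted local fit, the first step is simply to decompose
\begin{equation*}
\left\|\left(\mathbf{f}^{\bm\delta}\right)^{(k)}-f^{(k)}\right\|_{L^2(a,b)}^2=\sum_{j=1}^{J}\left\|\left(\mathbf{f}^{\bm\delta}\right)^{(k)}-f^{(k)}\right\|_{L^2(I_j)}^2,
\end{equation*}
so it suffices to control each summand on $I_j$.

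Next I would transport each local norm to the reference interval $\Lambda$. By the chain rule $\frac{d^k}{dx^k}\mathbf{f}^{\bm\delta}(x)=d_j^{-k}\big(\mathbf{g}_{\eta_j}^{\tilde\delta_j}\big)^{(k)}(t)$ and $f^{(k)}(x)=d_j^{-k}g_j^{(k)}(t)$, while the substitution contributes the Jacobian $dx=d_j\,dt$, so that
\begin{equation*}
\left\|\left(\mathbf{f}^{\bm\delta}\right)^{(k)}-f^{(k)}\right\|_{L^2(I_j)}^2=d_j^{1-2k}\left\|\big(\mathbf{g}_{\eta_j}^{\tilde\delta_j}\big)^{(k)}-g_j^{(k)}\right\|_{L^2(\Lambda)}^2.
\end{equation*}
Applying the single-interval theorem to $g_j\in H^p(\Lambda)$ with noise level $\tilde\delta_j$ gives $\big\|\big(\mathbf{g}_{\eta_j}^{\tilde\delta_j}\big)^{(k)}-g_j^{(k)}\big\|_{L^2(\Lambda)}=\mathcal{O}\big(\tilde\delta_j^{(p-k)/p}\big)$, whence each local contribution is $\mathcal{O}\big(d_j^{1-2k}\tilde\delta_j^{2(p-k)/p}\big)$.

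The final step is to sum and bound uniformly. Writing $d_j^{1-2k}=d_j\cdot d_j^{-2k}\le d_j\,\underline d^{-2k}$ (valid since $k\ge0$) and $\tilde\delta_j^{2(p-k)/p}\le\bar\delta^{2(p-k)/p}$, and using $\sum_{j=1}^J d_j=\frac{T}{2\pi}(b-a)$, we obtain
\begin{equation*}
\sum_{j=1}^J d_j^{1-2k}\tilde\delta_j^{2(p-k)/p}\le\underline d^{-2k}\bar\delta^{2(p-k)/p}\sum_{j=1}^J d_j=\mathcal{O}\big(\underline d^{-2k}\bar\delta^{2(p-k)/p}\big),
\end{equation*}
and taking square roots yields exactly \eqref{finalerrorest2}.

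The main obstacle is not the algebra but the \emph{uniformity} of the implied constants across the $J$ subintervals. The single-interval theorem's $\mathcal{O}$ hides the bound $E\ge\|Bc_B^\dag\|_{\ell^2}$, which is governed by $\|g_j\|_{H^p(\Lambda)}$; under the rescaling one has $\|g_j^{(m)}\|_{L^2(\Lambda)}^2=d_j^{2m-1}\|f_j^{(m)}\|_{L^2(I_j)}^2$, so these Sobolev norms carry $d_j$-dependent weights. To get a clean bound I would either argue that $\sum_j\|f_j\|_{H^p(I_j)}^2$ is controlled by $\|f\|_{H^p(a,b)}^2$ and absorb the $d_j$-weights into the constant (legitimate once the partition is fixed and the $d_j$ are bounded above and away from $0$), or track the $d_j$-dependence explicitly and check it is dominated by the displayed $\underline d^{-k}$ factor. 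One must also confirm that the hidden constants are independent of the interval-dependent truncation index $\eta_j$; this follows from the single-interval proof, where the final bounds \eqref{finalest1}--\eqref{finalest2} depend only on $E$, $E_0$, $\tau$ and $p$ and never on $\eta_j$ itself.
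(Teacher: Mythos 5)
Your proposal is correct and follows essentially the same route as the paper: decompose the squared $L^2$ norm over the subintervals, transport each piece to $\Lambda$ via the affine change of variables (picking up the factor $d_j^{1-2k}$), invoke the single-interval convergence theorem on each $g_j$, and sum using $d_j\geq\underline{d}$ and $\tilde\delta_j\leq\bar\delta$. In fact your write-up is somewhat more complete than the paper's, whose displayed proof stops after the change-of-variables inequality; your closing remark about the uniformity of the hidden constants (the dependence of $E$ on $\|g_j\|_{H^p(\Lambda)}$ under rescaling) addresses a point the paper leaves implicit.
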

\begin{proof}
\begin{equation}
  \begin{aligned}
    \left\|\left({\bf f}^{\bm \delta}\right)^{(k)}-f^{(k)}\right\|^2_{L^2(a,b)}&=&&\int_a^b\left[\left({\bf f}^{\bm \delta}\right)^{(k)}(x)-f^{(k)}(x)\right]^2dx\\
    &=&&\sum_{j=1}^J\int_{I_j}\left[\left(\mathcal{Q}{\eta_j}^{\tilde{\delta}_j}g^{\delta}_j\right)^{(k)}\left(\frac{x-a_j}{d_j}\right)-g^{(k)}\left(\frac{x-a_j}{d_j}\right)\right]^2dx\\
    &=&&\sum_{j=1}^J\int_{\Lambda}\frac{1}{d_j^{2k-1}}\left[\left(\mathcal{Q}{\eta_j}^{\tilde{\delta}_j}g^{\delta}_j\right)^{(k)}\left(t\right)-g^{(k)}\left(t\right)\right]^2dt\\
    &=&&\frac{T}{2\pi}\sum_{j=1}^J\frac{a_j-a_{j-1}}{d_j^{2k}}\left\|\left(\mathcal{Q}{\eta_j}^{\tilde{\delta}_j}g^{\delta}_j\right)^{(k)}-g^{(k)}\right\|_{L^2(\Lambda)}^2\\
    &\leq&&\frac{T(b-a)}{2\pi}\sum_{j=1}^J\frac{1}{d_j^{2k}}\left\|\left(\mathcal{Q}{\eta_j}^{\tilde{\delta}_j}g^{\delta}_j\right)^{(k)}-g^{(k)}\right\|_{L^2(\Lambda)}^2\\
  \end{aligned}
\end{equation}
\end{proof}
\begin{remark}
  \begin{itemize}
    \item  Comparing the theoretical results of \eqref{finalerrorest} and \eqref{finalest1}, it seems that there is no substantial advantage of the multi-interval method, but we need to note that the function $f(x)=e(\text{i}\pi\omega x)$ corresponds to
  \begin{equation}\label{omegachange}
 g_j(t)=e^{\text{i}\omega\pi a_{j-1}}e^{\text{i}\omega\pi s_j t}.
  \end{equation}
That is to say, for high-frequency functions, we can make each $g_j$ a low-frequency function by dividing the region, and its corresponding Fourier coefficients converge quickly, and the corresponding $\|W{ c}_B^{\dag}\|_{\ell^2}$ can be maintained within the range allowed by the machine precision, so that the function $g_j$ can be effectively approximated at a smaller discrete scale.
\item Later, we will show in the numerical implementation part that we can use the same discrete matrix to solve in each subinterval $I_j$, thereby greatly reducing the computational complexity of the problem.
\item The results of \eqref{finalest1} also show that over-division of the interval $[a,b]$ will reduce the accuracy. The scale of division should be related to the frequency of the function f. How to determine the scale of division, we will give a posterior method combining errors in the numerical implementation part.
  \end{itemize}
\end{remark}
\section{Numerical implement based on uniform sampling \label{sec:implementation}}
\subsection{Discretization of equation \eqref{compactoperatoreqpert}}
From the previous analysis, we can see that we need to repeatedly solve equation \eqref{compactoperatoreqpert} with different right-hand terms. For fixed $n$, let
$${\Phi}_n=\{\phi_{\ell}\}_{|\ell|\leq n}.$$
Then take the sampling ratio $\gamma\geq 1$ and let
$$m=\lceil \gamma(2n+1)\rceil,$$
where $\lceil \cdot\rceil$ is the rounding symbol. When we select parameters $n,\gamma, T$ and take
 $$L=\lceil T\times m\rceil, \quad h=\frac{2\pi}{T},\quad t_i=ih,\quad i=0,1,\ldots, m-1.$$
  Then we can obtain the discrete form of equation \eqref{compactoperatoreqpert}:
  \begin{equation}
    \label{discretform}
    \mathbf{F}_{\gamma,n}^{T}{\bf c}=\mathbf{g}^{\delta},
  \end{equation}
  where
  \begin{equation*}
     \left( \mathbf{F}_{\gamma,n}^{T}{\bf c}\right)_{i,\ell}=\frac{1}{\sqrt{L}}\phi_{\ell}(t_i),\quad \left(\mathbf{g}^{\delta}\right)_i=\frac{1}{\sqrt{m}}g^{\delta}(t_i).
  \end{equation*}
    \begin{remark}
   The parameters $n, \gamma$ and $T$ have a substantial impact on the properties of the discrete matrix, which in turn affects the final calculation results. Reference \cite{zhao2025local} conducted a large number of empirical tests on the relevant parameters, which we will not repeat here. In the following, we use the following parameters for calculation:
   $$n=9, \gamma=1, T=6.$$
Take $$\mathbf{G}_{\gamma,n}^{T}=\mathbf{F}_{\gamma,n}^{T}{\mathbf{W}^{-1}_n},$$
   where $ {\mathbf W}_n$ is  a diagonal matrix with
   \begin{equation*}
({\bf d})_{\ell}=e^{|\ell|},\quad \ell=-n,\ldots,n.
   \end{equation*}
  as the diagonal vector. Now we can obtain the SVD of matrix $\mathbf{G}_{\gamma,n}^{T}$ as
  \begin{equation}
    \mathbf{G}_{\gamma,n}^{T}= \sum_{j=1}^{2n+1}\mathbf{u}_j{\bm\sigma}_j\mathbf{v}_j^T,
  \end{equation}
  where
  ${\bm\sigma}_j$ are the singular values of $ \mathbf{G}_{\gamma,n}^{T}$ such that
  \begin{equation*}
    {\bm \sigma}_1\geq{\bm \sigma}_2\geq\ldots {\bm\sigma}_{2n+1}\geq 0,
  \end{equation*}
  while ${\bf u}_i$ and ${\bf v}_i$ are the left and right singular vectors of $ \mathbf{G}_{\gamma,n}^{T}$, respectively.

  Now if we assume that the exact data ${\bf g}$ and the perturbation data ${\bf g}^{\delta}$ satisfy:
  \begin{equation}
    \|{\bf g}^{\delta}-{\bf g}\|_{\ell^2}\leq {\bm\delta},
  \end{equation}
  Then the generalized truncated singular value decomposition solution of \eqref{discretform} can be given as:
  \begin{equation}\label{discretsol}
   {\bf c}^{{\bm \delta},{\bm\eta}}={\bf W}_n^{-1} \left(\sum_{i=1}^{{\bm\eta}}\frac{1}{{\bm\sigma}_i}\langle {\bf g}^{\delta},{\bf u}_i\rangle {\bf v}_i\right).
  \end{equation}
  where ${\bm\eta}$ is determined by the discrepancy principle
  \begin{equation}
   \left\|\mathbf{F}_{\gamma,n}^{T}{\bf c}^{{\bm \delta},{\bm\eta}}-{\bf g}^{\delta}\right\|_{\ell^2}\leq {\bm\tau}{\bm \delta}<
   \left\|\mathbf{F}_{\gamma,n}^{T}{\bf c}^{{\bm \delta},{\bm\eta}-1}-{\bf g}^{\delta}\right\|_{\ell^2}
  \end{equation}
  with ${\bm \tau} >1$.

  \end{remark}
  \subsection{Adaptive domain partitioning strategy}

The adaptive domain partitioning strategy forms the computational backbone of our multi-interval approach. The algorithm
performs local Fourier-extension fits on adaptively refined subintervals and accepts a fit when its reconstruction error,
appropriately measured and weighted by the assumed noise level, falls below a relaxed tolerance. To allow unlimited bisection, the number of global sampling nodes is chosen as
\[
M = 2^{r}(m-1)+1,
\]
where $m$ is the minimal number of nodes required in a single interval and $r$ is a nonnegative integer chosen so that
the global grid aligns with dyadic subdivisions; after downsampling, local fits always use $m$ reference samples so that
linear algebra quantities are consistent with the same $m$-point local fitting rule.

Next, we present a detailed description of the multi-interval (recursive bisection) implementation and give pseudocode
that corresponds precisely to the MATLAB implementation. The overall procedure consists of the following four main stages:
\begin{enumerate}
\item \textbf{Precomputation}: Given $n$, $m$, $L$, $T$, compute:
\begin{itemize}
\item Sub-Fourier matrix $\mathbf{G} \in \mathbb{C}^{m \times (2n+1)}$
\item Extended matrix $\mathbf{TG} \in \mathbb{C}^{L \times (2n+1)}$ for high-resolution reconstruction
\item Derivative matrices $\mathbf{dG} = \mathbf{G} \cdot \operatorname{diag}(i k \cdot 2\pi/T)$, $\mathbf{dTG} = \mathbf{TG} \cdot \operatorname{diag}(i k \cdot 2\pi/T)$
\item SVD of $\mathbf{G} = \mathbf{U} \mathbf{S} \mathbf{V}^T$
\end{itemize}
These are computed once and reused throughout.

\item \textbf{Local Fourier Extension Fitting} (Algorithm~\ref{alg:LocalFourierFit}):
For subinterval $[a_{\text{sub}}, b_{\text{sub}}]$ with noisy data $\mathbf{y}_d \in \mathbb{R}^{nn}$:
\begin{itemize}
\item Compute downsampling ratio $r_{\text{ds}} = (nn-1)/(m-1)$
\item Extract reduced vector $\mathbf{y}_{d,\text{red}} = \mathbf{y}_d[1:r_{\text{ds}}:nn]$
\item Solve via truncated SVD: $\mathbf{Coef} = \text{LocalFourierFit}(\mathbf{y}_{d,\text{red}}, \delta_{\text{loc}}, m, \mathbf{U}, \mathbf{S}, \mathbf{V}, \mathbf{G})$
\item Reconstruct: $\mathbf{FE} = \Re(\mathbf{TG} \cdot \mathbf{Coef})$
\item Interpolate to original resolution: $\mathbf{refinedFE} = \operatorname{interpft}(\mathbf{FE}, r_{\text{ds}} \cdot L)$
\end{itemize}

\item \textbf{A Posteriori Error Criterion}:
Compute residual
 $$\epsilon_{\text{res}} = \|\mathbf{y}_d - \mathbf{refinedFE}(1:nn)\|_2.$$
 Given noise bound $\delta_1$, define $\epsilon_{\text{target}} = \sqrt{nn/3} \delta_1$.
  If $$\epsilon_{\text{res}} \leq \rho \epsilon_{\text{target}} (\rho=2) \quad \text{or}\quad  nn \leq m,$$
   accept the subinterval; otherwise, bisect and recurse.

\item \textbf{Derivative Reconstruction} (Algorithm~\ref{alg:MultiDiffInterp}):
For each accepted subinterval with coefficients $\mathbf{Coef}$:
\begin{itemize}
\item Compute derivative approximation: $\mathbf{dFE} = \mathbf{dTG} \cdot \mathbf{Coef} \cdot Scale$ where $Scale = 1/(b_{\text{sub}} - a_{\text{sub}})$
\item Interpolate: $\mathbf{dFE}_{\text{interp}} = \operatorname{interpft}(\mathbf{dFE}, r_{\text{ds}} \cdot L)$
\item Concatenate results across subintervals
\end{itemize}
\end{enumerate}

\begin{remark}{Implementation Notes}
\begin{itemize}
\item \textbf{Global SVD reuse}: The SVD of $\mathbf{G}$ is computed once, enabling fast projections without refactorization.
\item \textbf{Consistent downsampling}: $M = 2^r(m-1)+1$ ensures $r_{\text{ds}}$ is integer-valued at all recursion levels.
\item \textbf{Termination condition}: Recursion stops when $nn \leq m$ to prevent undersampling.
\item \textbf{Noise adaptation}: The local tolerance $\delta_{\text{loc}} = \delta_1 \sqrt{m/3}$ assumes uniform noise variance $\delta_1^2/3$.
\end{itemize}
\end{remark}

\begin{algorithm}
\caption{Local Fourier Extension Fitting via Truncated SVD \label{alg:LocalFourierFit}}
\begin{algorithmic}[1]
\Function{LocalFourierFit}{$\mathbf{y}_d, \delta_{\text{loc}}, m, \mathbf{U}, \mathbf{S}, \mathbf{V}, \mathbf{G}$}
\State $\mathbf{Coef} \gets \mathbf{0}_{m \times 1}$, $\mathbf{ry} \gets \mathbf{0}_{m \times 1}$, $k \gets 1$
\State $\epsilon_{\text{res}} \gets \|\mathbf{y}_d\|_2$
\While{$\epsilon_{\text{res}} > \delta_{\text{loc}}$ and $k \leq m$}
    \State $\mathbf{c}_k \gets \mathbf{V}(:,k) \cdot (\mathbf{U}(:,k)^T \mathbf{y}_d / \mathbf{S}(k,k))$
    \State $\mathbf{Coef} \gets \mathbf{Coef} + \mathbf{c}_k$
    \State $\mathbf{ry} \gets \mathbf{ry} + \mathbf{G} \mathbf{c}_k$
    \State $\epsilon_{\text{res}} \gets \|\mathbf{ry} - \mathbf{y}_d\|_2$
    \State $k \gets k + 1$
\EndWhile
\State \Return $\mathbf{ry}, \mathbf{Coef}$
\EndFunction
\end{algorithmic}
\end{algorithm}

\begin{algorithm}
\caption{Recursive Multi-Interval Fitting \label{alg:RecursiveInterp}}
\begin{algorithmic}[1]
\Function{RecursiveInterp}{$a, b, \mathbf{y}_d, m, \delta_1, \mathbf{U}, \mathbf{S}, \mathbf{V}, \mathbf{G}, \mathbf{TG}$}
\State $nn \gets \text{length}(\mathbf{y}_d)$
\State $r_{\text{ds}} \gets (nn-1)/(m-1)$
\State $\mathbf{y}_{d,\text{red}} \gets \mathbf{y}_d[1:r_{\text{ds}}:nn]$
\State $\delta_{\text{loc}} \gets \delta_1 \sqrt{m/3}$
\State $(\mathbf{ry}, \mathbf{Coef}) \gets$ \Call{LocalFourierFit}{$\mathbf{y}_{d,\text{red}}, \delta_{\text{loc}}, m, \mathbf{U}, \mathbf{S}, \mathbf{V}, \mathbf{G}$}
\State $\mathbf{FE} \gets \Re(\mathbf{TG} \cdot \mathbf{Coef})$
\State $\mathbf{refinedFE} \gets \operatorname{interpft}(\mathbf{FE}, r_{\text{ds}} \cdot L)$
\State $\epsilon_{\text{res}} \gets \|\mathbf{y}_d - \mathbf{refinedFE}(1:nn)\|_2$
\State $\epsilon_{\text{target}} \gets \sqrt{nn/3} \delta_1$
\If{$\epsilon_{\text{res}} \leq \rho \epsilon_{\text{target}}$ or $nn \leq m$}
    \State \Return $[a; b; \mathbf{Coef}]$
\Else
    \State $mid \gets (nn-1)/2 + 1$ \Comment{Ensure integer index}
    \State {\small$\mathbf{coeffs}_L \gets$ \Call{RecursiveInterp}{$a, \frac{a+b}{2}, \mathbf{y}_d[1:mid], m, \delta_1, \mathbf{U}, \mathbf{S}, \mathbf{V}, \mathbf{G}, \mathbf{TG}$}}
    \State {\small$\mathbf{coeffs}_R \gets$ \Call{RecursiveInterp}{$\frac{a+b}{2}, b, \mathbf{y}_d[mid:nn],m,\delta_1, \mathbf{U}, \mathbf{S}, \mathbf{V}, \mathbf{G}, \mathbf{TG}$}}
    \State \Return $[\mathbf{coeffs}_L, \mathbf{coeffs}_R]$
\EndIf
\EndFunction
\end{algorithmic}
\end{algorithm}

\begin{algorithm}
\caption{Derivative Reconstruction \label{alg:MultiDiffInterp}}
\begin{algorithmic}[1]
\Function{MultiDiffInterp}{$a, b, M, m, L, \mathbf{coeffs}, \mathbf{dTG}$}
\State $\mathbf{diffy} \gets [\,]$, $L_{\text{global}} \gets b - a$
\For{each subinterval record $(a_j, b_j, \mathbf{Coef})$ in $\mathbf{coeffs}$}
    \State $K \gets L_{\text{global}} / (b_j - a_j)$
    \State $r_{\text{ds}} \gets (M-1)/(m-1) / K$
    \State $\mathbf{dFE} \gets \mathbf{dTG} \cdot \mathbf{Coef} \cdot K / (b - a)$
    \State $\mathbf{dFE}_{\text{interp}} \gets \operatorname{interpft}(\mathbf{dFE}, r_{\text{ds}} \cdot L)$
    \State Append $\mathbf{dFE}_{\text{interp}}[1:r_{\text{ds}} \cdot (m-1)]$ to $\mathbf{diffy}$
\EndFor
\State Append last point of $\mathbf{dFE}_{\text{interp}}$ to $\mathbf{diffy}$
\State \Return $\mathbf{diffy}$
\EndFunction
\end{algorithmic}
\end{algorithm}

\section{Numerical tests\label{sec:tests}}
In this section, we assess the performance of the proposed algorithm through a series of numerical experiments.
All computations were performed on a Windows~10 system with 16~GB of memory and an Intel(R) Core(TM)~i7-8500U CPU @~1.80~GHz, using \textsc{MATLAB}~2016b.
We compare the algorithm (M1) introduced herein with the full-data algorithm (M2) of~\cite{chen2018}.
For M2, we fix the parameters $T = 2$ and $\gamma = 2$ to facilitate implementation.
Unless otherwise specified, the maximum recursion depth is set to $r = 6$, corresponding to
\[
M = 2^6 \times (19 - 1) + 1 = 1153
\]
sampling nodes for the reconstruction of derivatives from perturbed data.
For lower-frequency functions, smaller values of $r$ may be used in practice to reduce the computational cost.

The relative derivative error is defined by
\[
  RE = \frac{\left( \frac{1}{M} \sum_{i=1}^M \left[ f'(x_i) - \left( {\bf f}^{\bm\delta} \right)' (x_i) \right]^2 \right)^{1/2}}
  {\left( \frac{1}{M} \sum_{i=1}^M \left[ f'(x_i) \right]^2 \right)^{1/2}}.
\]

We consider the following three classes of test functions:
\begin{itemize}
\item \emph{Low-frequency functions}:
\[
  f_1(x) = e^x, \quad f_2(x) = x^3 - 3x^2 + \frac{x}{2}.
\]
\item \emph{High-frequency functions within the interval}:
\[
  f_3(x) = \cos\!\left( \frac{100}{1 + 25x^2} \right), \quad f_4(x) = \mathrm{erf}(x).
\]
\item \emph{High-frequency functions near the interval boundaries}:
\[
  f_5(x) = \cos(100x^2), \quad f_6(x) = \frac{1}{1.1 - x^2}.
\]
\end{itemize}

\begin{figure}
			\begin{center}
\subfigure[\label{1a} $f(x)=\exp(x)$] {
\resizebox*{6cm}{!}{\includegraphics{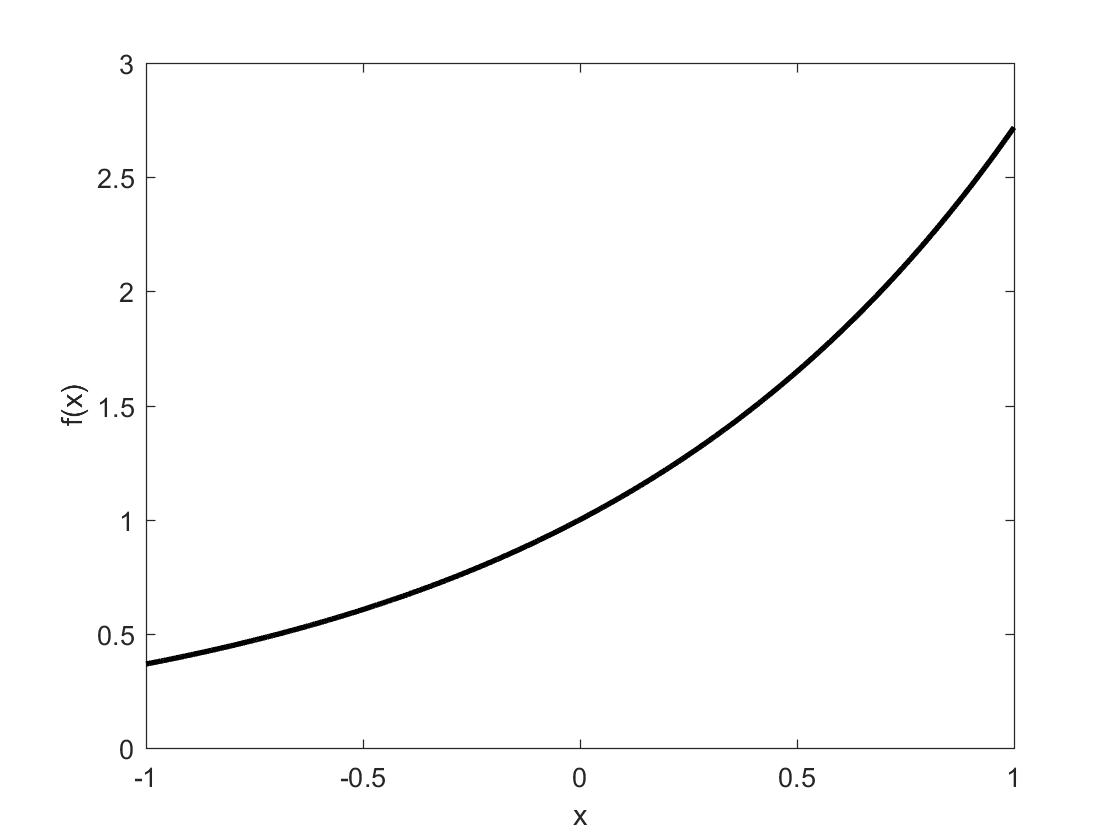}}
}%
\subfigure[\label{1b} relative error] {
\resizebox*{6cm}{!}{\includegraphics{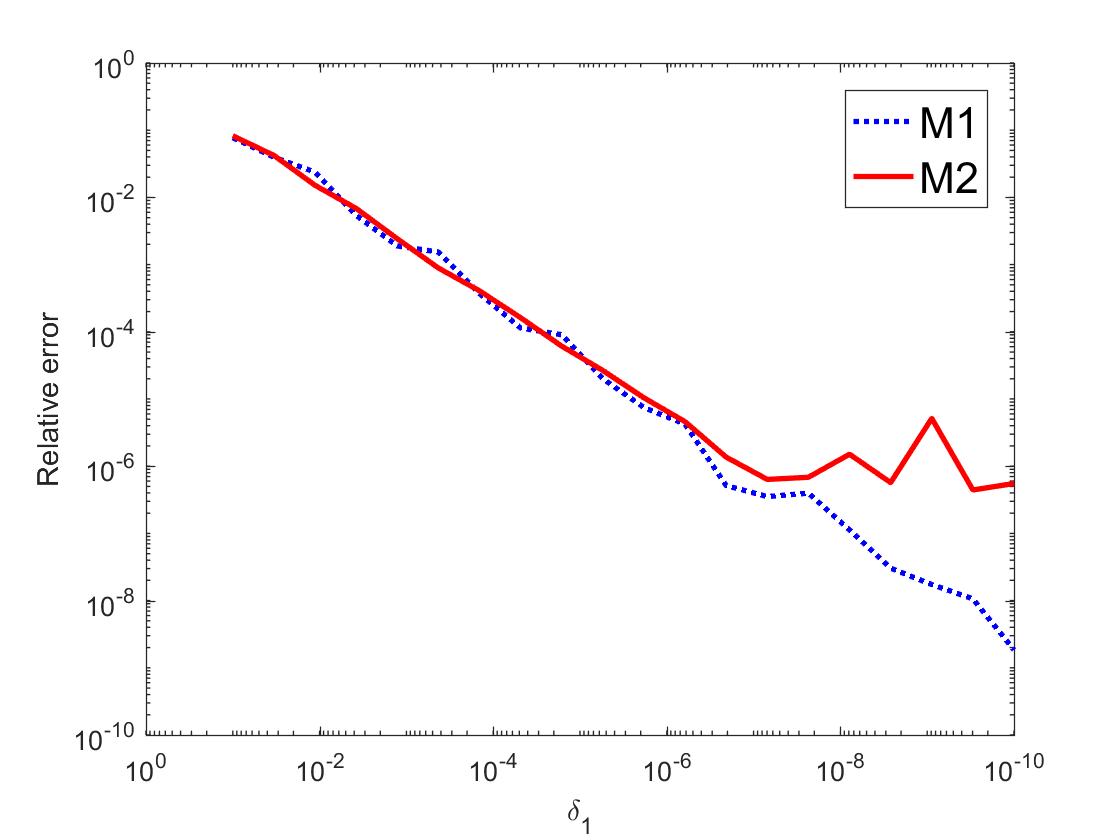}}
}%

\subfigure[\label{1c} 1st-order derivative ($\delta_1=1e-2$)] {
\resizebox*{6cm}{!}{\includegraphics{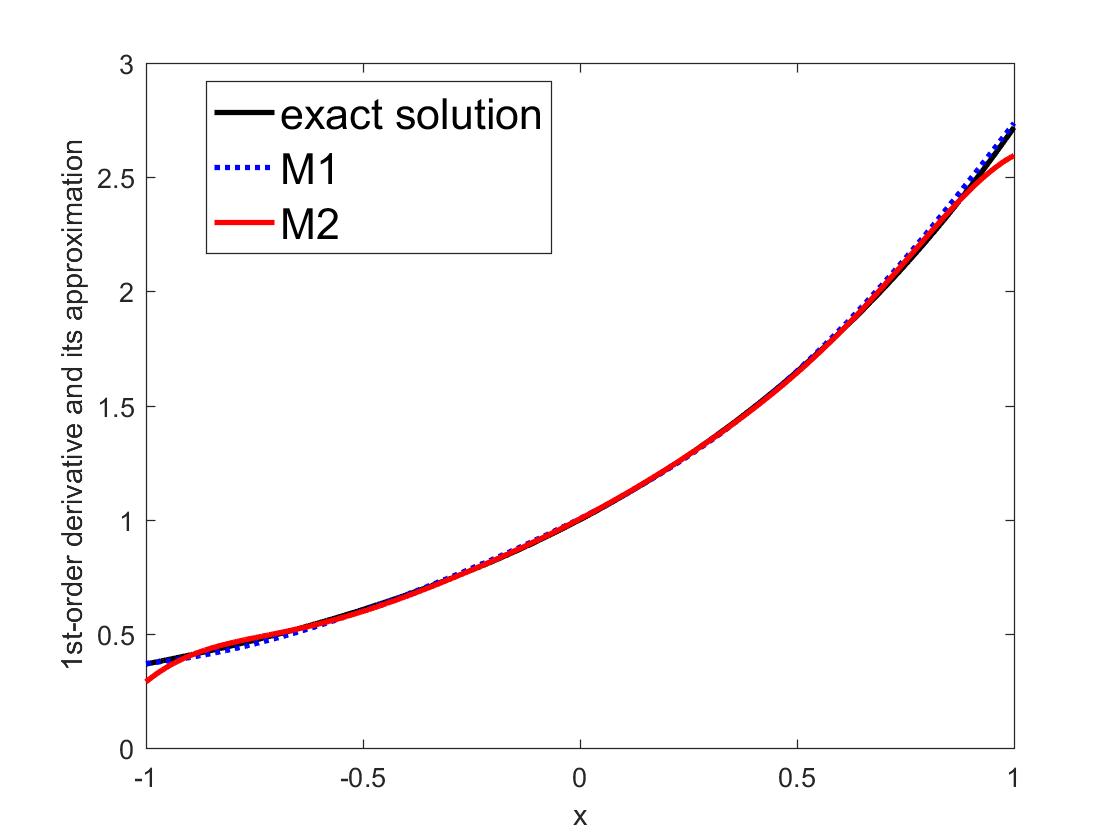}}
}%
\subfigure[\label{1d} Point-by-point error and  division nodes ($\delta_1=1e-2$)] {
\resizebox*{6cm}{!}{\includegraphics{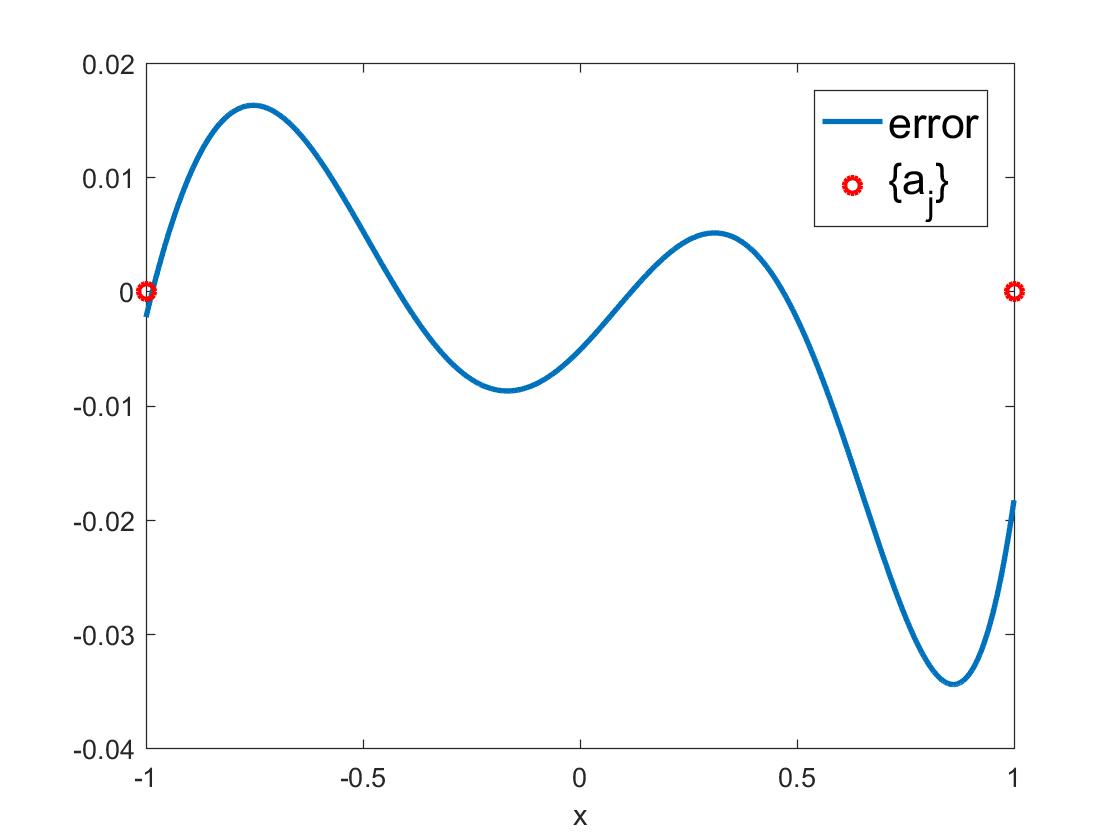}}
}%

{\caption{ Numerical results of $f_1(x)$}\label{Fig1}}
	\end{center}
\end{figure}

\begin{figure}
			\begin{center}
\subfigure[\label{2a} $f(x)=x^3-3x^2+x/2$] {
\resizebox*{6cm}{!}{\includegraphics{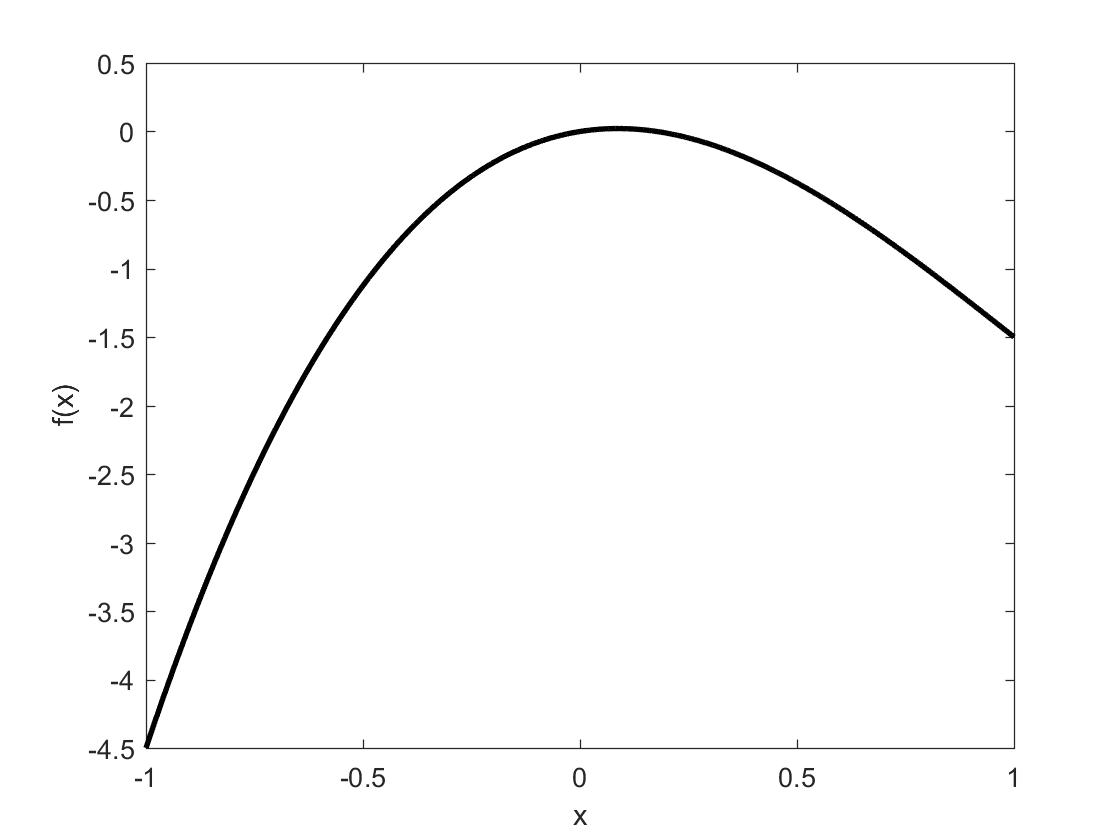}}
}%
\subfigure[\label{2b} relative error] {
\resizebox*{6cm}{!}{\includegraphics{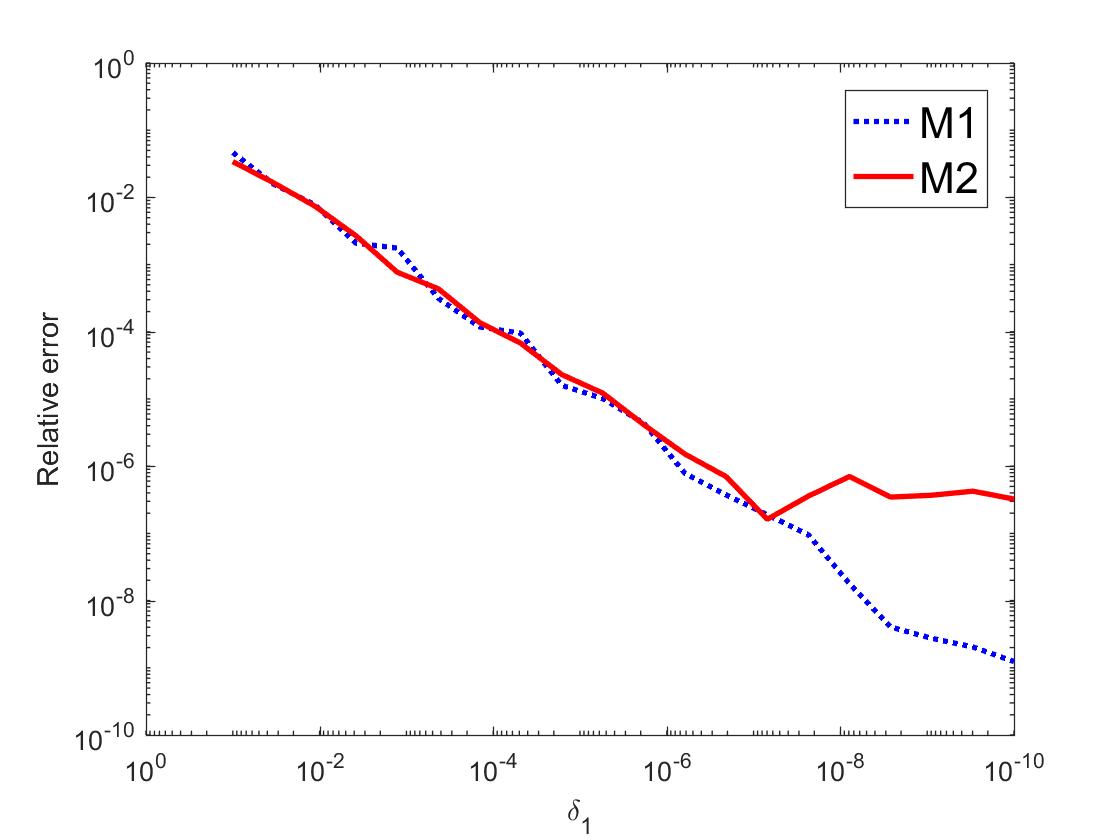}}
}%

\subfigure[\label{2c} 1st-order derivative ($\delta_1=1e-2$)] {
\resizebox*{6cm}{!}{\includegraphics{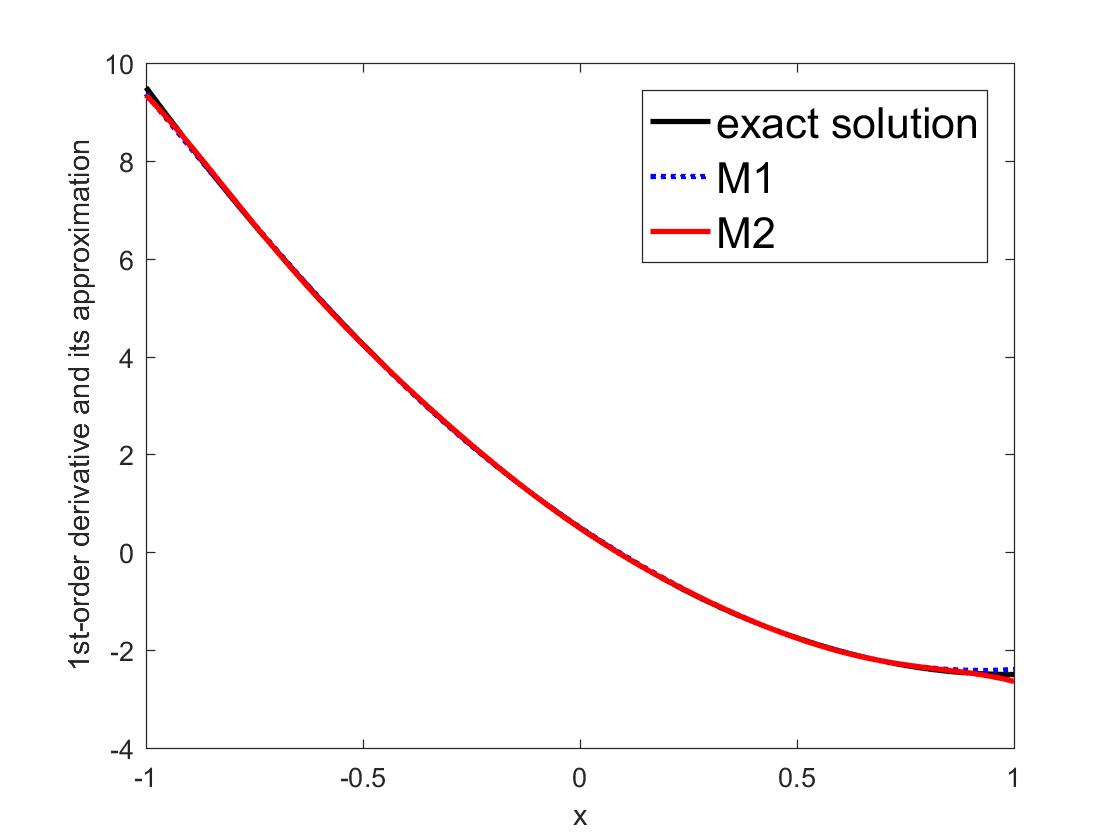}}
}%
\subfigure[\label{2d} Point-by-point error and  division nodes ($\delta_1=1e-2$)] {
\resizebox*{6cm}{!}{\includegraphics{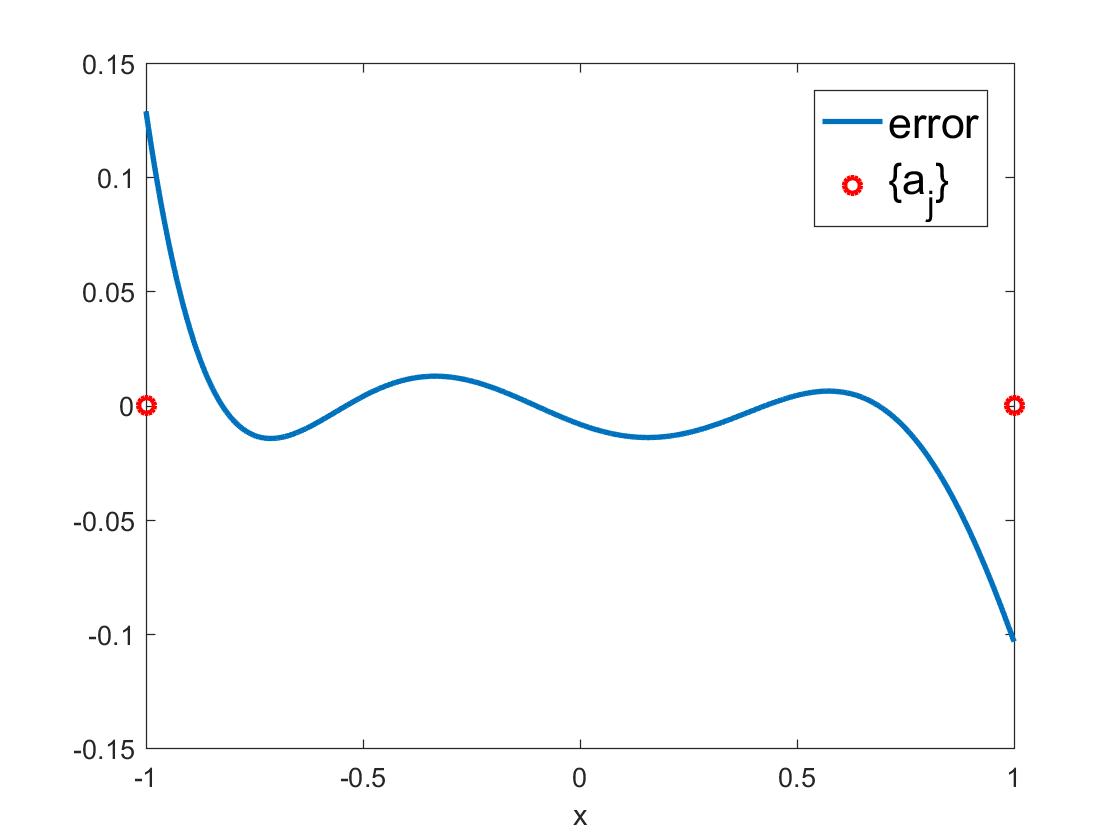}}
}%

{\caption{ Numerical results of $f_2(x)$}\label{Fig2}}
	\end{center}
\end{figure}

\begin{figure}
			\begin{center}
\subfigure[\label{3a} $f(x)=\cos\left(\frac{100}{1+25x^2}\right)$] {
\resizebox*{6cm}{!}{\includegraphics{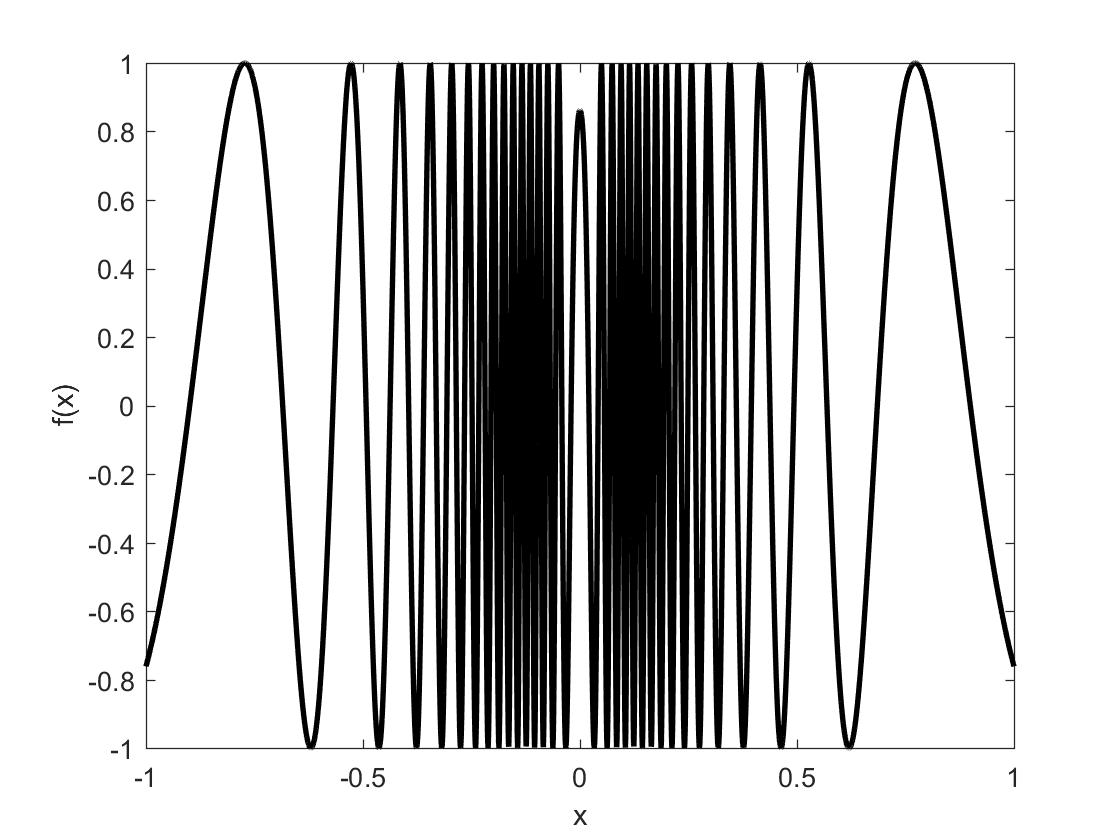}}
}%
\subfigure[\label{3b} relative error] {
\resizebox*{6cm}{!}{\includegraphics{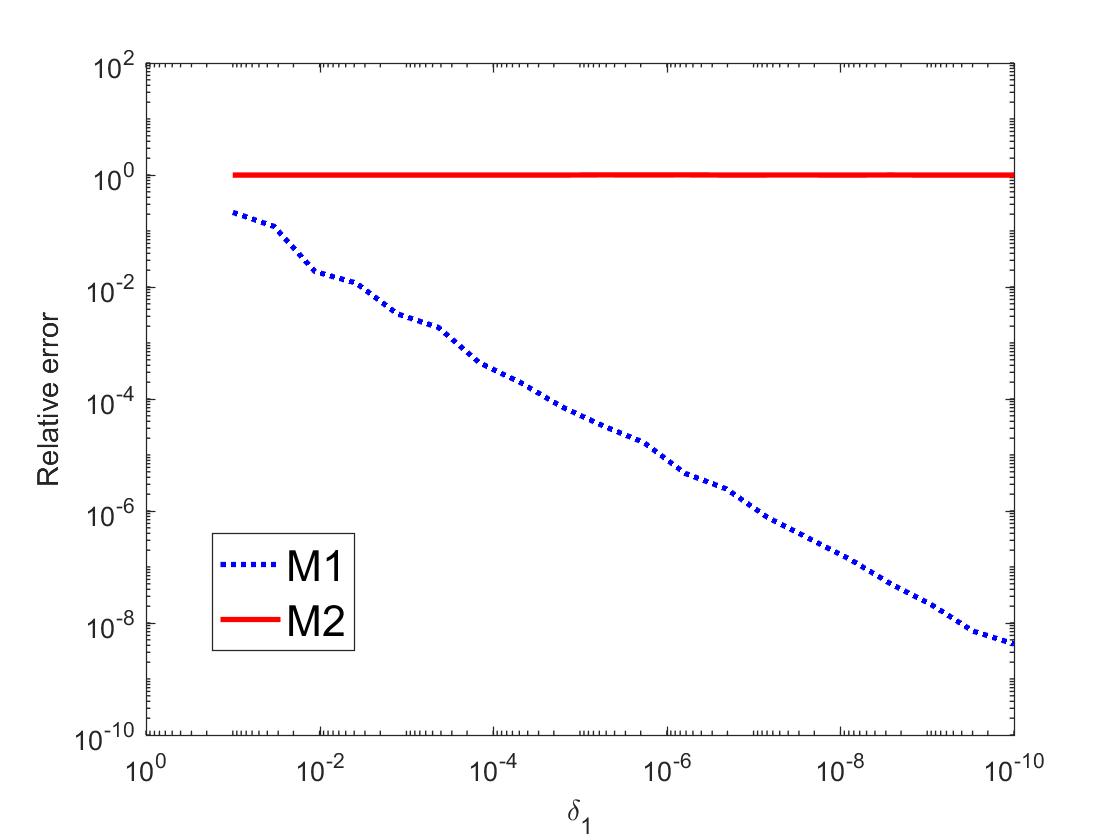}}
}%

\subfigure[\label{3c} 1st-order derivative ($\delta_1=1e-3$)] {
\resizebox*{6cm}{!}{\includegraphics{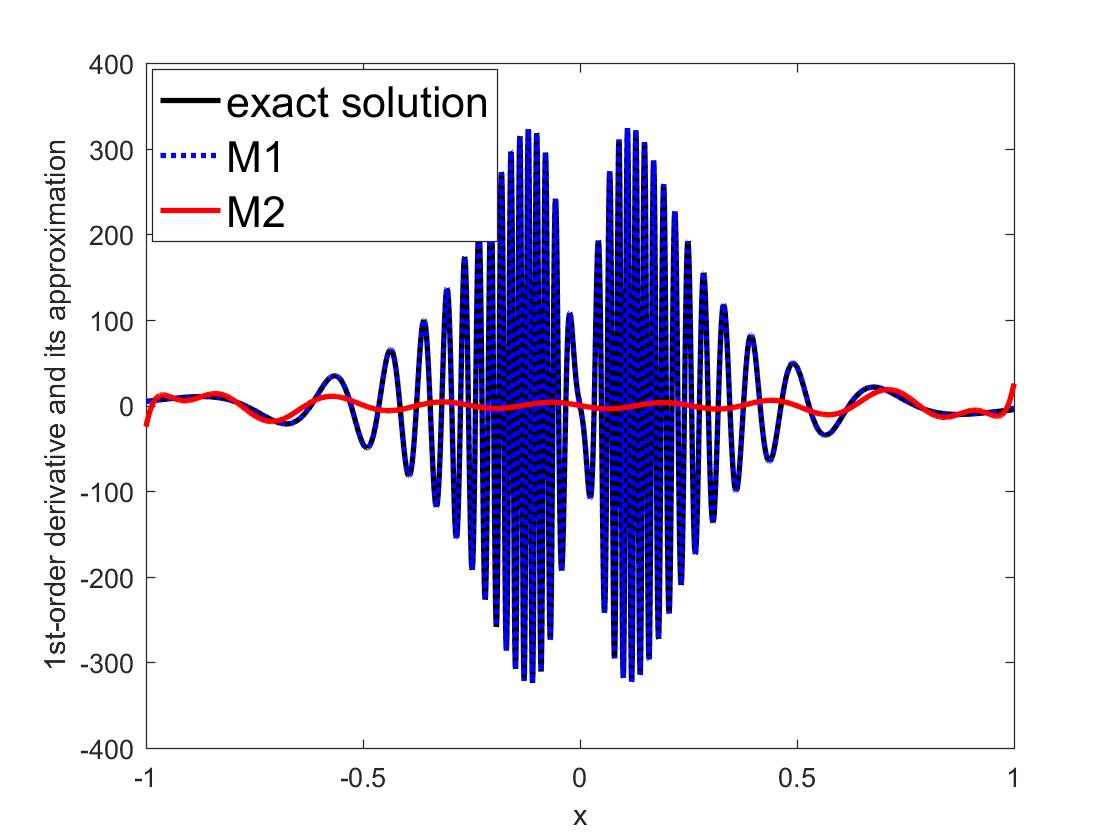}}
}%
\subfigure[\label{3d} Point-by-point error and  division nodes ($\delta_1=1e-3$)] {
\resizebox*{6cm}{!}{\includegraphics{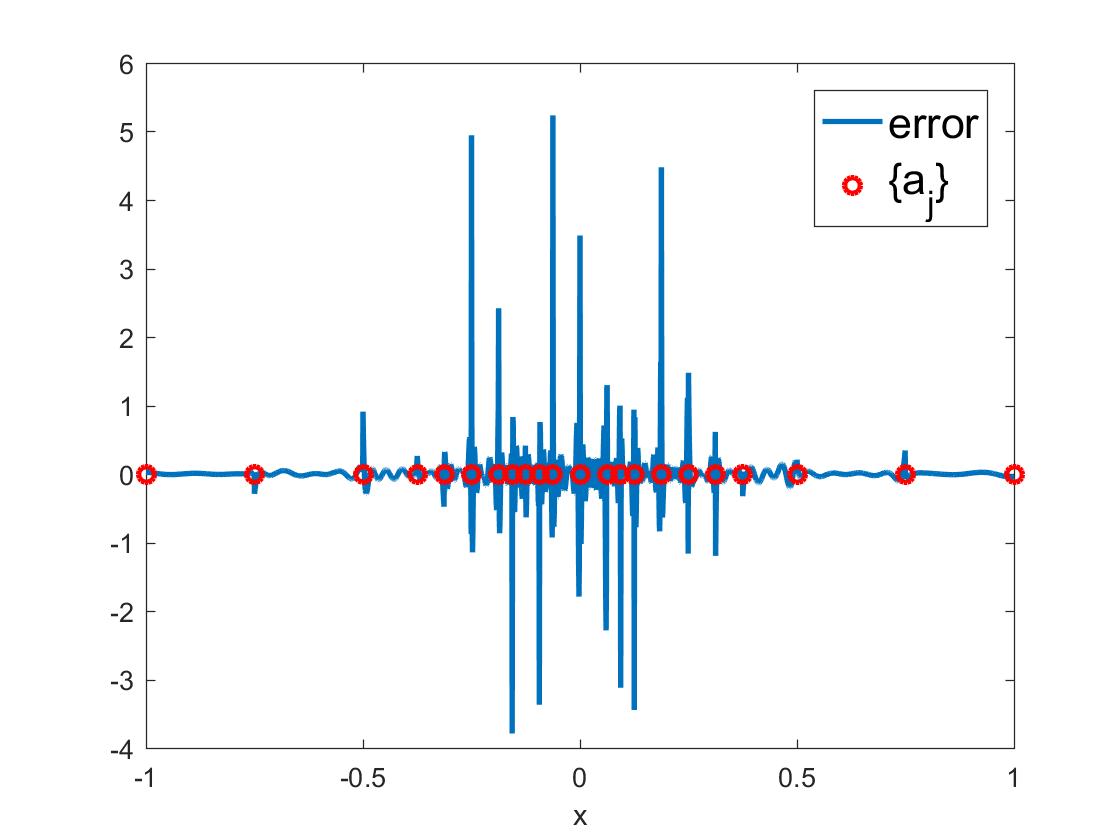}}
}%

{\caption{ Numerical results of $f_3(x)$}\label{Fig3}}
	\end{center}
\end{figure}

\begin{figure}
			\begin{center}
\subfigure[\label{4a} $f(x)=\text{erf}(x)$] {
\resizebox*{6cm}{!}{\includegraphics{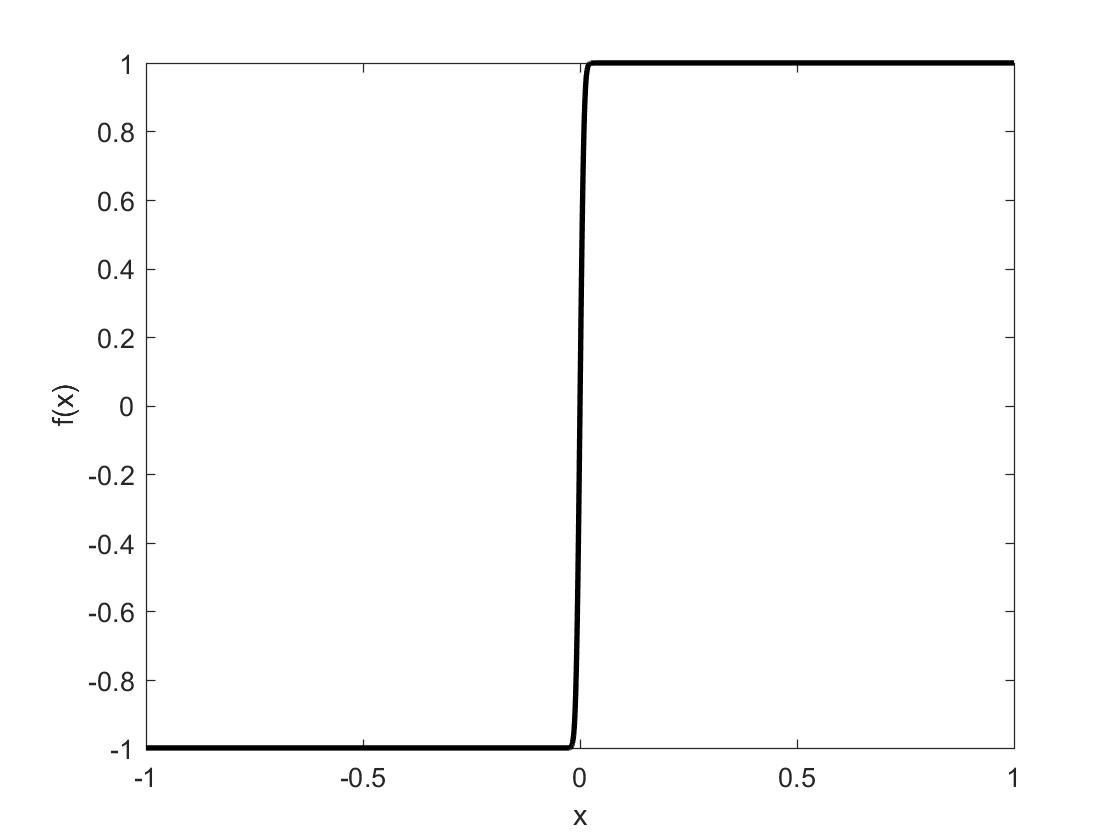}}
}%
\subfigure[\label{4b} relative error] {
\resizebox*{6cm}{!}{\includegraphics{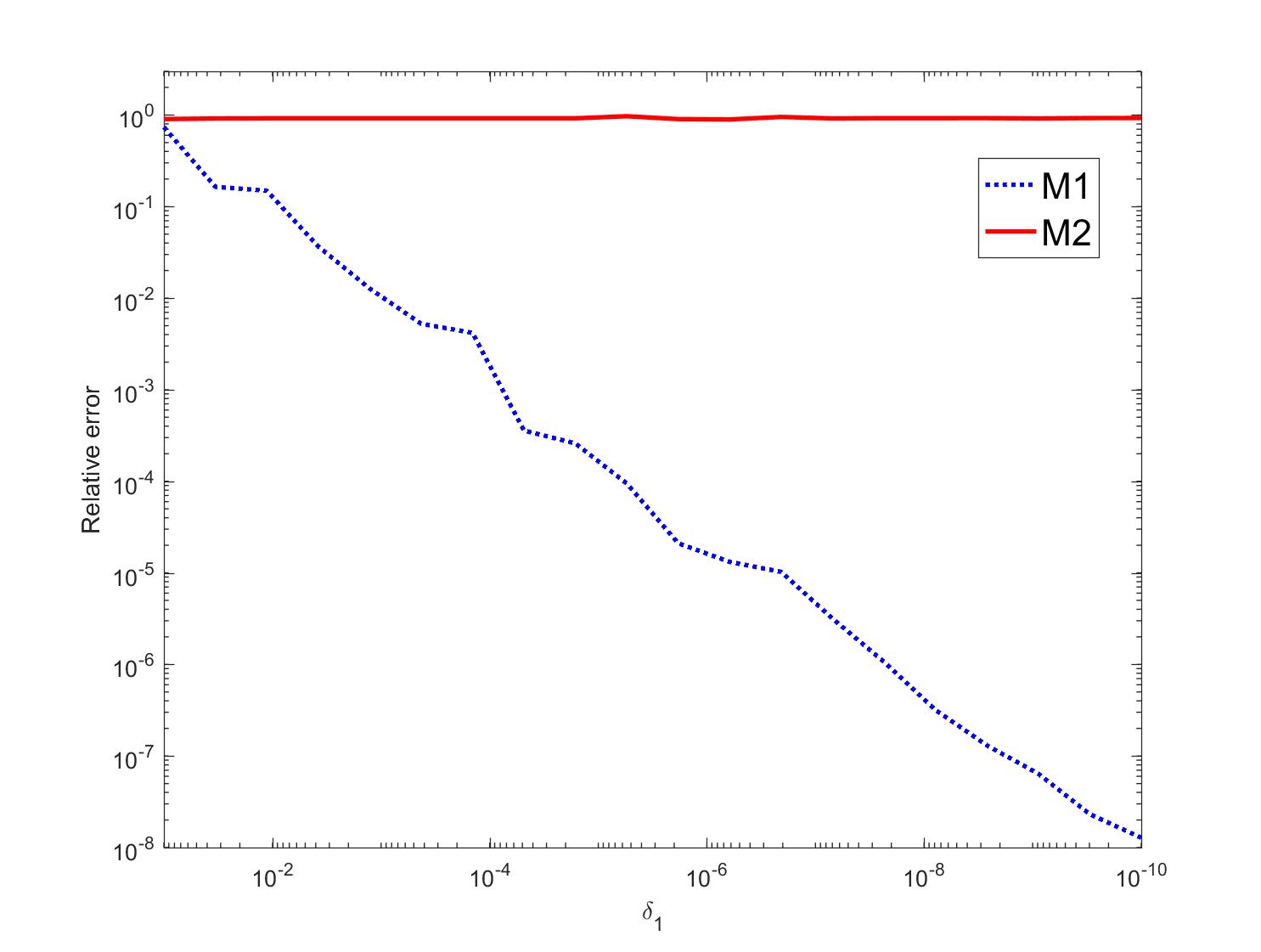}}
}%

\subfigure[\label{4c} 1st-order derivative ($\delta_1=1e-3$)] {
\resizebox*{6cm}{!}{\includegraphics{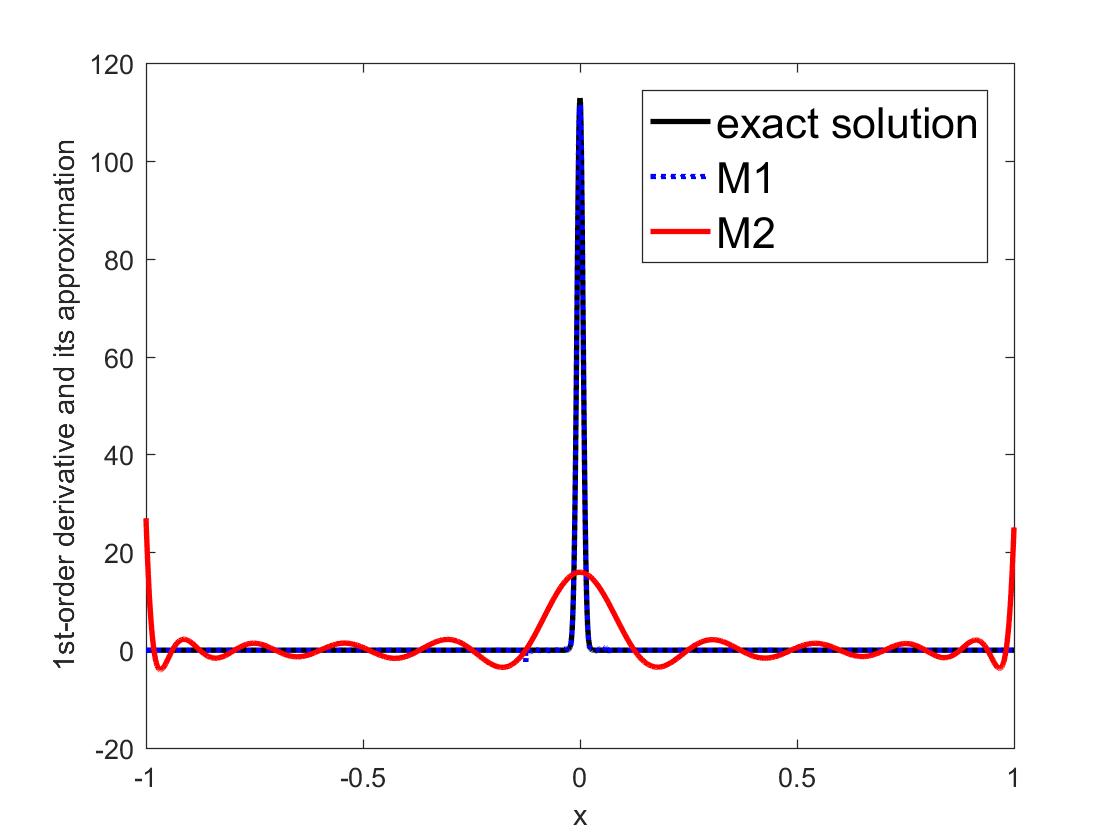}}
}%
\subfigure[\label{4d} Point-by-point error and  division nodes ($\delta_1=1e-3$)] {
\resizebox*{6cm}{!}{\includegraphics{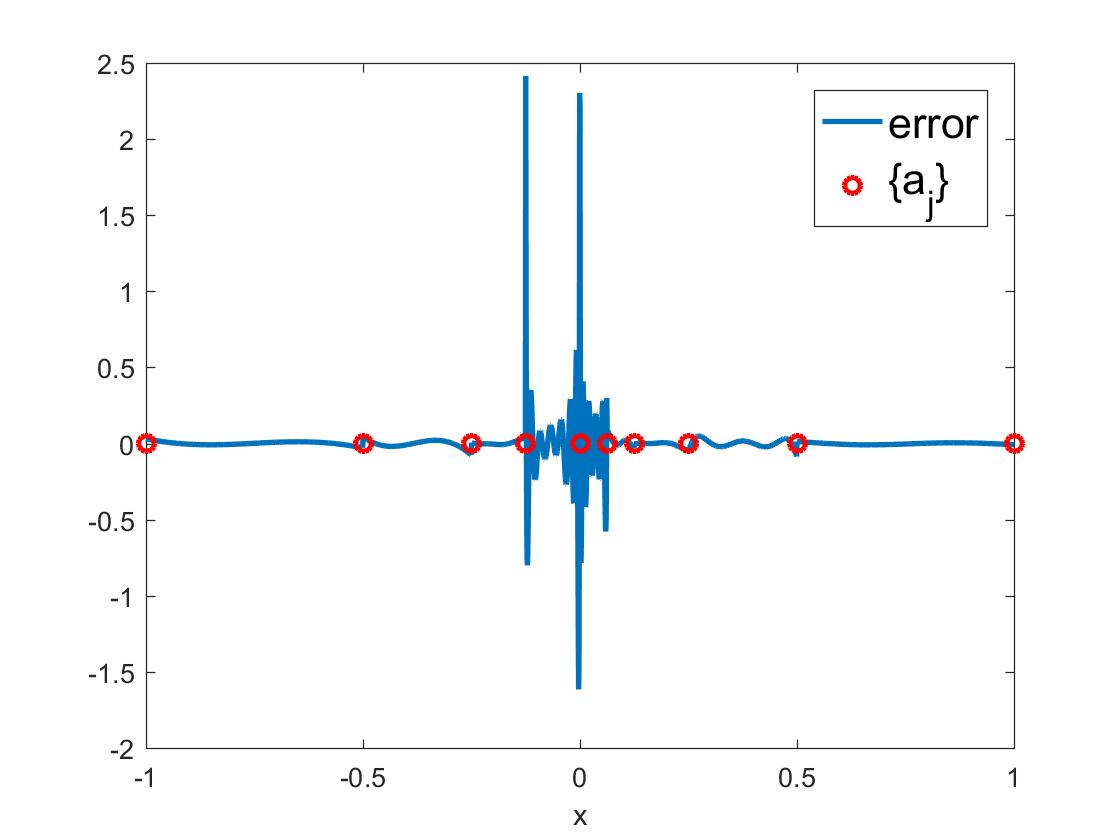}}
}%

{\caption{ Numerical results of $f_4(x)$}\label{Fig4}}
	\end{center}
\end{figure}

\begin{figure}
			\begin{center}
\subfigure[\label{5a} $f(x)=\cos(100x^2)$] {
\resizebox*{6cm}{!}{\includegraphics{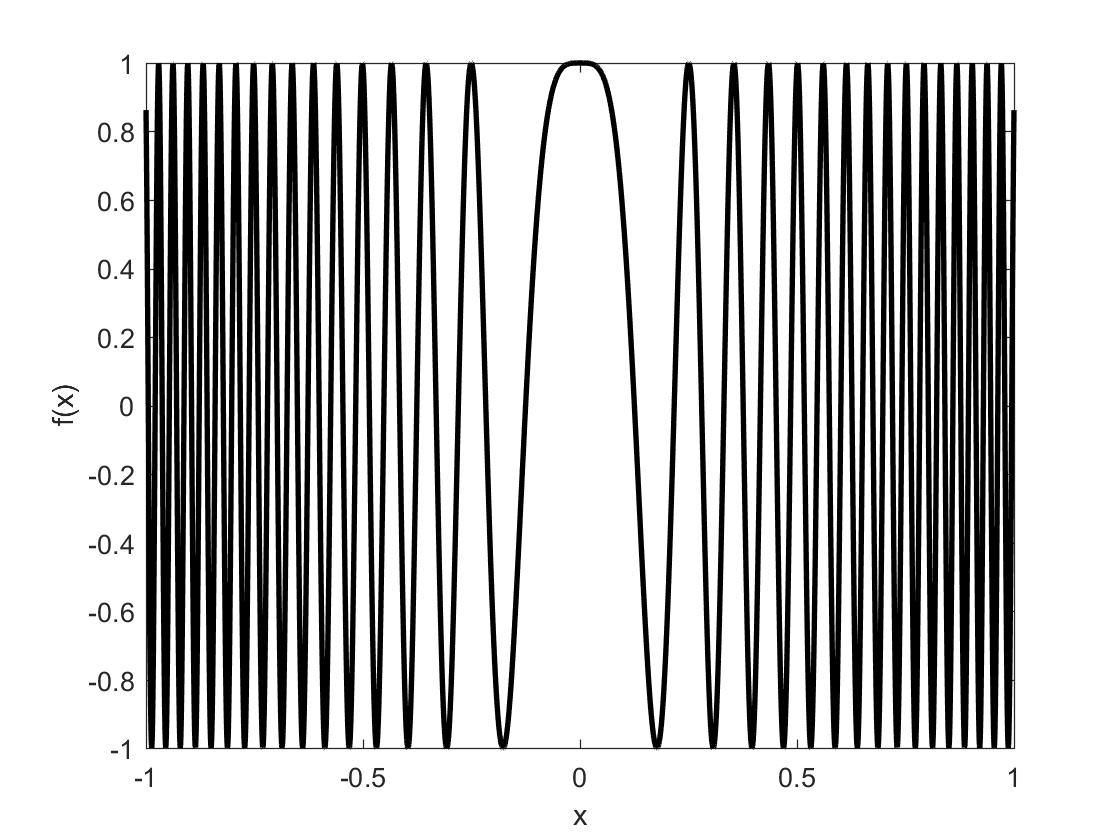}}
}%
\subfigure[\label{5b} relative error] {
\resizebox*{6cm}{!}{\includegraphics{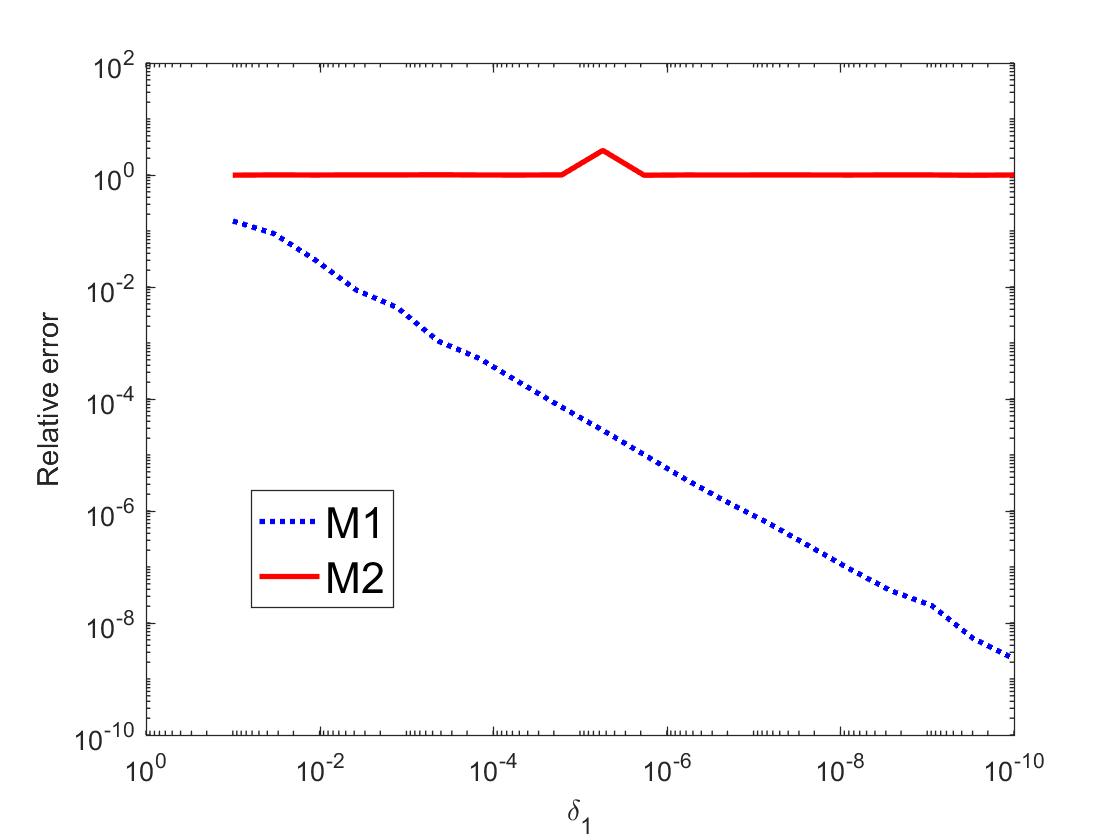}}
}%

\subfigure[\label{5c} 1st-order derivative ($\delta_1=1e-3$)] {
\resizebox*{6cm}{!}{\includegraphics{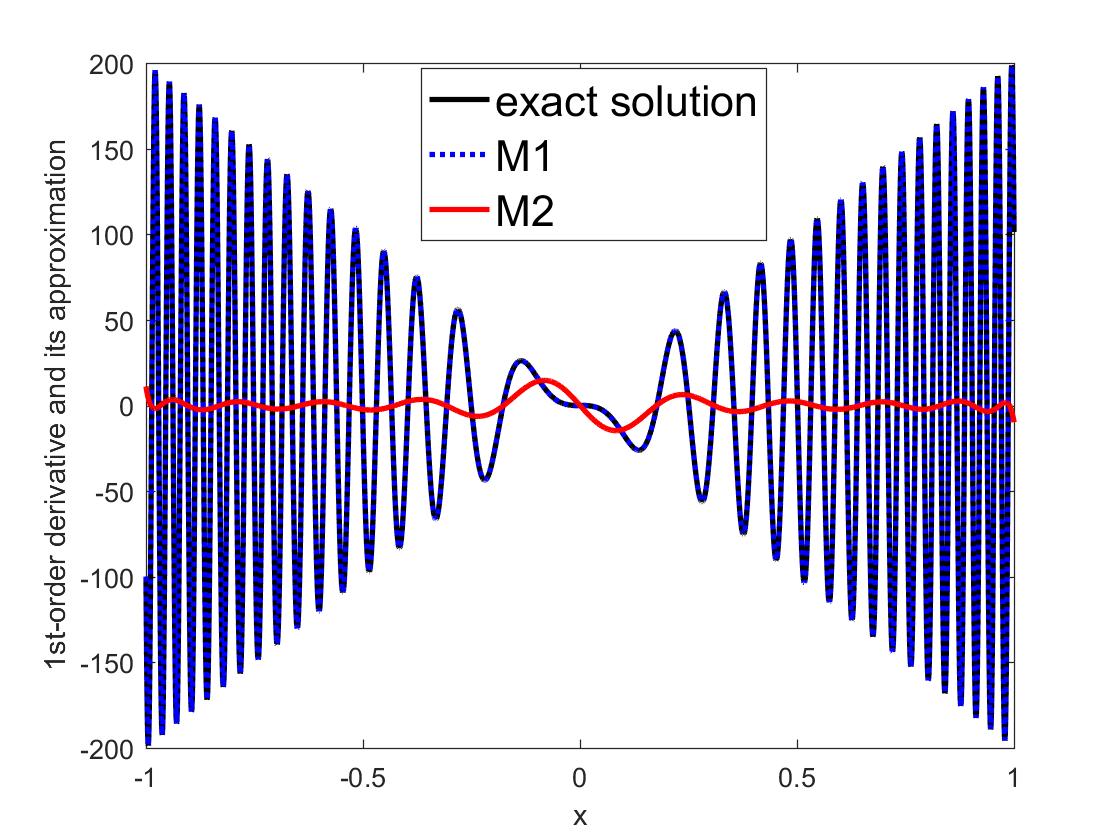}}
}%
\subfigure[\label{5d} Point-by-point error and  division nodes ($\delta_1=1e-3$)] {
\resizebox*{6cm}{!}{\includegraphics{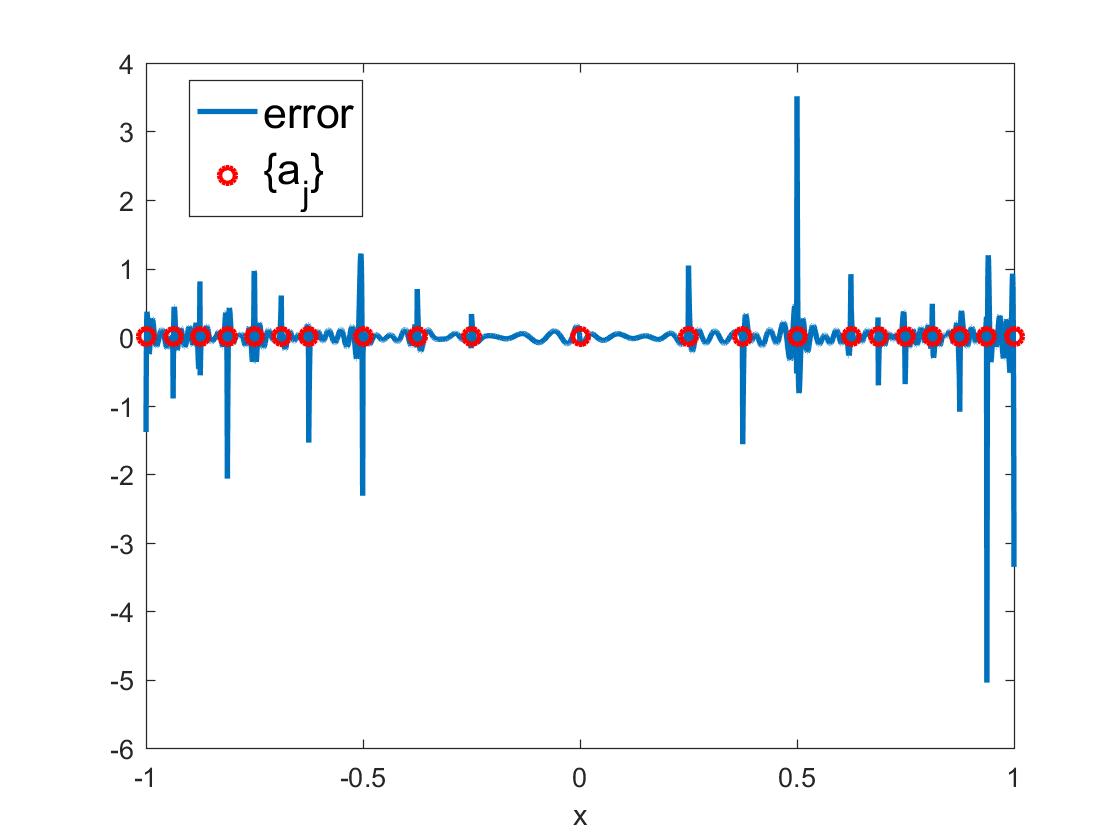}}
}%

{\caption{ Numerical results of $f_5(x)$}\label{Fig5}}
	\end{center}
\end{figure}

\begin{figure}
			\begin{center}
\subfigure[\label{6a} $f(x)=\frac{1}{1.1-x^2}$] {
\resizebox*{6cm}{!}{\includegraphics{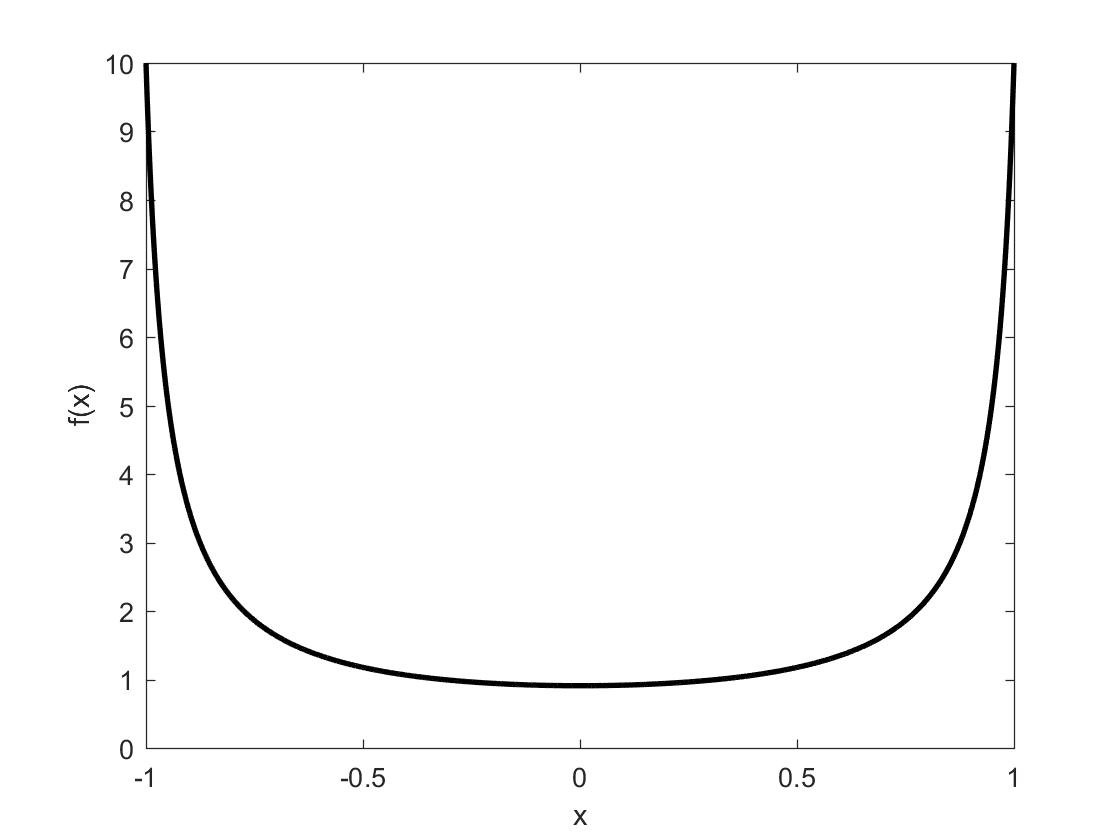}}
}%
\subfigure[\label{6b} relative error] {
\resizebox*{6cm}{!}{\includegraphics{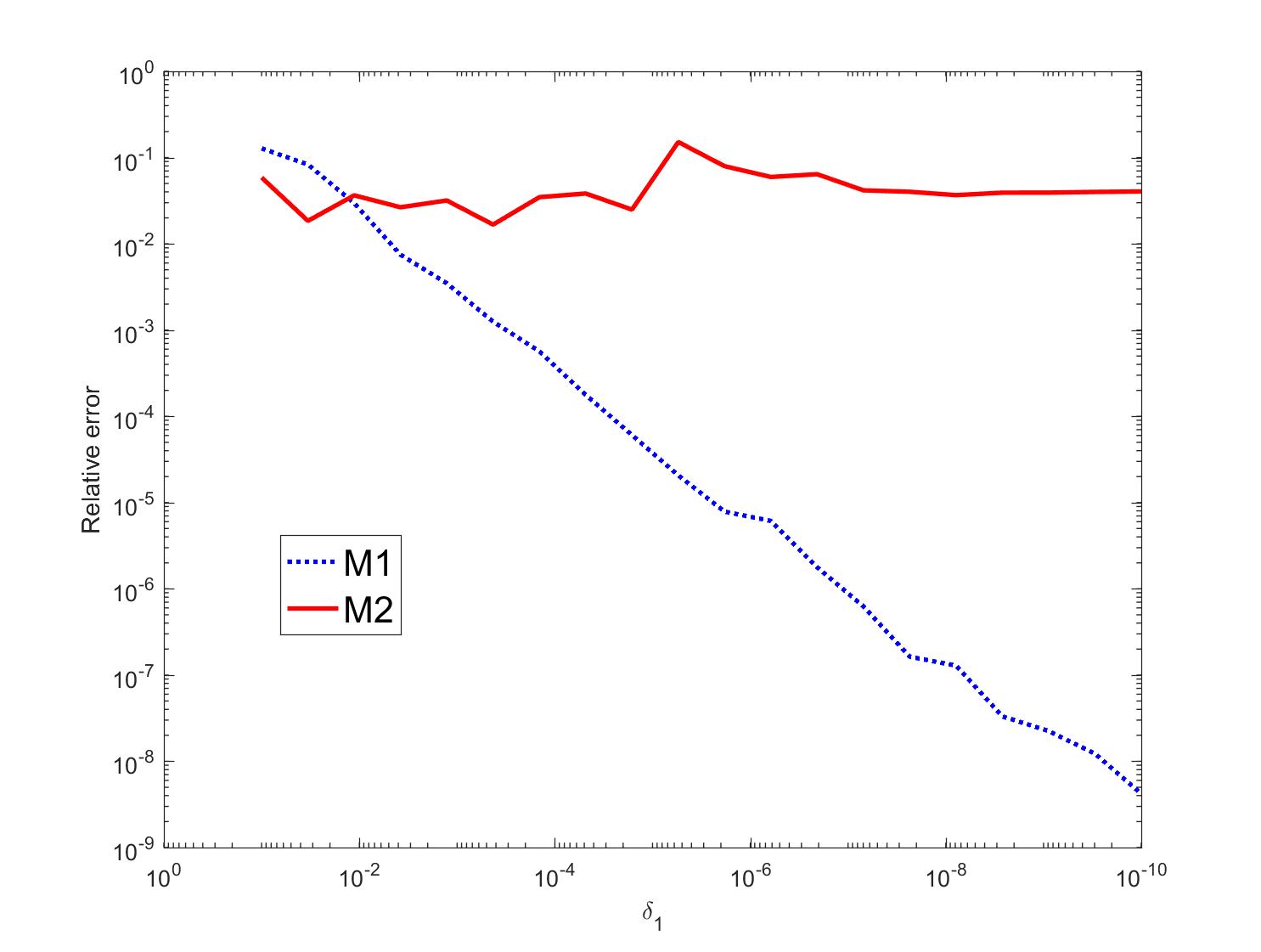}}
}%

\subfigure[\label{6c} 1st-order derivative ($\delta_1=1e-3$)] {
\resizebox*{6cm}{!}{\includegraphics{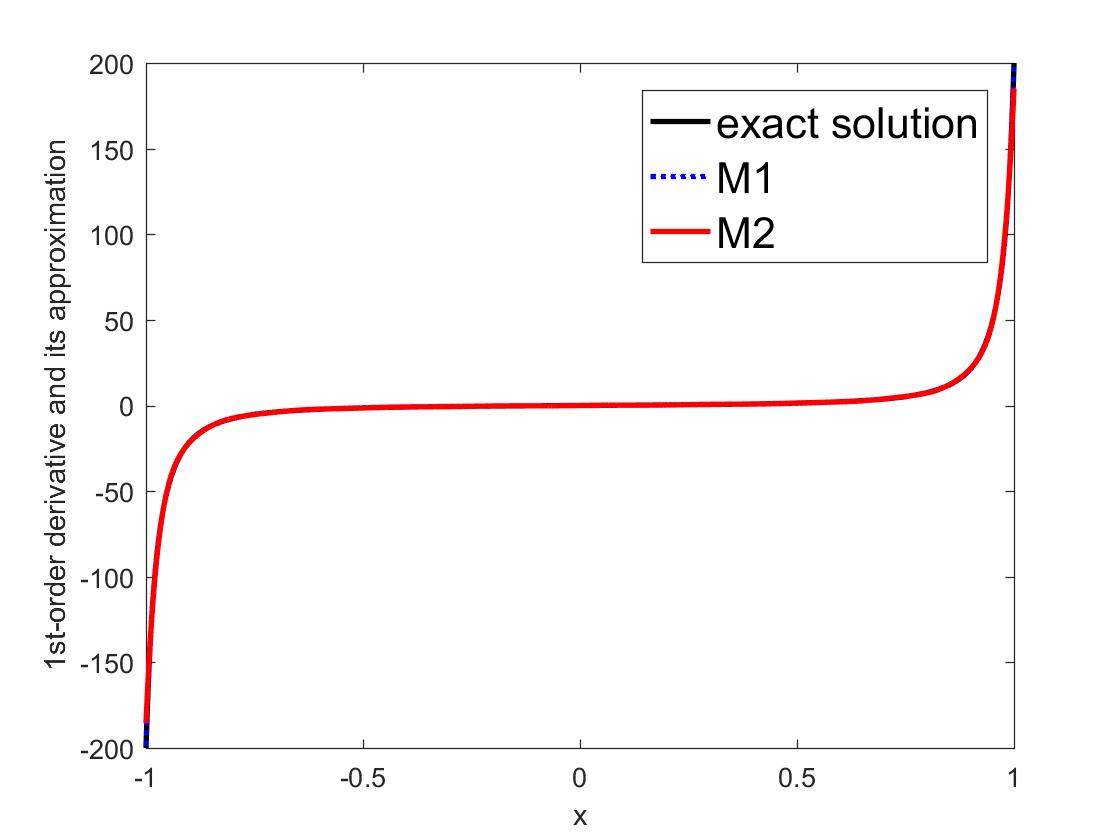}}
}%
\subfigure[\label{6d} Point-by-point error and  division nodes ($\delta_1=1e-3$)] {
\resizebox*{6cm}{!}{\includegraphics{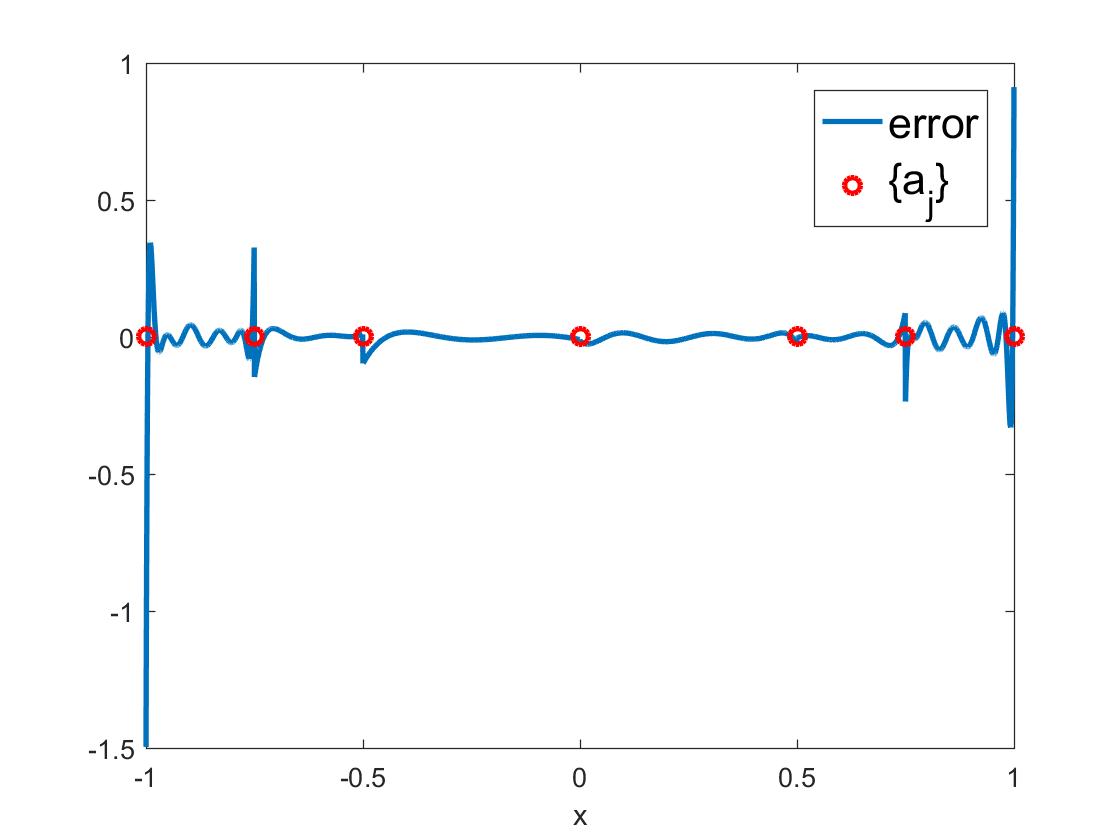}}
}%

{\caption{ Numerical results of $f_6(x)$}\label{Fig6}}
	\end{center}
\end{figure}

Figures~\ref{Fig1}--\ref{Fig6} present the numerical results.
For low-frequency functions (Figs.~\ref{1b} and~\ref{2b}), the two methods exhibit similar performance when the error level is relatively high.
However, once the error level drops below a certain threshold, the accuracy of M2 saturates and no longer improves with decreasing data error, whereas the approximation error of M1 continues to decrease.
This behavior arises because M2 solves the normal equations; see the analysis in~\cite{zhao2025new} for further details.

For high-frequency functions, whether the oscillations occur within the interval or are concentrated near the boundaries (Figs.~\ref{3b}--\ref{6b}), M2 fails to produce accurate derivative estimates, while M1 consistently delivers high-accuracy approximations in both cases.
Figures~\ref{1d}--\ref{6d} depict the distribution of the points $\{a_j\}$ for different test functions.
The distribution is largely consistent with the frequency characteristics of the underlying function, with slight deviations attributable to data noise.

Overall, these experiments demonstrate that the proposed method M1 effectively overcomes the limitations of M2, providing a stable and efficient approach for numerical differentiation.

\section{Conclusions and remarks\label{sec:conclusion}}
We conclude with the following remarks:
\begin{itemize}
  \item We have proposed a numerical differentiation method based on multi-interval Fourier extension.
  By partitioning the domain and reusing precomputed matrices, the method substantially reduces computational complexity.
  Moreover, the frequency-reduction effect of interval partitioning mitigates the machine-precision limitations inherent in high-order regularization methods, thereby enabling accurate reconstruction of high-frequency functions.
  This offers valuable insights for the treatment of ill-posed problems.

  \item Although numerical experiments for higher-order derivatives are not presented, Theorem~2 ensures that the proposed approach applies to derivatives of arbitrary order.

  \item The reconstructed function produced by our method is, in general, discontinuous.
  This discontinuity can be removed, if desired, by applying a piecewise spline post-processing step to enforce smoothness.
  As this is a straightforward procedure, it is omitted here.
\end{itemize}

\end{document}